\numberwithin{equation}{section}
\newtheorem{lemma}{Lemma}[section]
\newtheorem{prop}[lemma]{Proposition}
\newtheorem{thm}[lemma]{Theorem}
\newtheorem{cor}[lemma]{Corollary}
\theoremstyle{definition}
\newtheorem{conjecture}[lemma]{Conjecture}
\newtheorem{metaconjecture}[lemma]{Metaconjecture}
\theoremstyle{remark}
\newtheorem{remark}[lemma]{Remark}
\def\O{\mathcal{O}}
\def\a{\mathbf{a}}
\def\b{\mathbf{b}}
\def\c{\mathbf{c}}
\def\p{\mathbf{p}}
\def\tmu{\tilde{\mu}}
\def\A{\mathcal{A}}
\numberwithin{equation}{section} \numberwithin{table}{section}
\title{Equidistribution results for self-similar measures}
\author{Simon Baker\\ \\
\emph{School of Mathematics,} \\ \emph{University of Birmingham,} \\ \emph{Birmingham,  B15 2TT, UK.} \\ Email: simonbaker412@gmail.com\\}
\date{\today}
\begin{document}
\maketitle

\begin{abstract}
	A well known theorem due to Koksma states that for Lebesgue almost every $x>1$ the sequence $(x^n)_{n=1}^{\infty}$ is uniformly distributed modulo one. In this paper we give sufficient conditions for an analogue of this theorem to hold for a self-similar measure. Our approach applies more generally to sequences of the form  $(f_{n}(x))_{n=1}^{\infty}$ where $(f_n)_{n=1}^{\infty}$ is a sequence of sufficiently smooth real valued functions satisfying some nonlinearity conditions. As a corollary of our main result, we show that if $C$ is equal to the middle third Cantor set and $t\geq 1$, then with respect to the natural measure on $C+t,$ for almost every $x$ the sequence $(x^n)_{n=1}^{\infty}$ is uniformly distributed modulo one. \\

\noindent \emph{Mathematics Subject Classification 2010}: 	11K06, 28A80.\\

\noindent \emph{Key words and phrases}: Self-similar measures, uniform distribution, powers of real numbers.

\end{abstract}

\section{Introduction}
A sequence $(x_n)_{n=1}^{\infty}$ of real numbers is said to be uniformly distributed modulo one if for every pair of real numbers $u,v $ with $0\leq u <v\leq 1$ we have 
\begin{equation}
\label{Uniform distribution}
\lim_{N\to\infty}\frac{\#\{1\leq n \leq N:x_n\bmod 1\in [u,v]\}}{N}=v-u.
\end{equation} The study of uniformly distributed sequences has its origins in the pioneering work of Weyl \cite{Weyl} from the early $20$th century. From these beginnings this topic has developed into an important area of mathematics, with many deep connections to Ergodic Theory, Number Theory, and Probability Theory. Generally speaking, it is a challenging problem to determine whether a given sequence of real numbers is uniformly distributed modulo one. Often the sequences one considers are of dynamical or number theoretic origins. For an overview of this topic we refer the reader to \cite{Bug}, \cite{KN}, and the references therein.

In this paper, we are interested in the distribution of the sequence $(x^n)_{n=1}^{\infty}$ modulo one for $x>1$. The study of these sequences dates back to the work of Hardy \cite{Hardy} and Pisot \cite{Pisot2,Pisot}. It is a difficult problem to describe the distribution of $(x^n)_{n=1}^{\infty}$ modulo one for specific values of $x$. It is still unknown whether there exists a transcendental $x>1$ such that $\lim_{n\to\infty}\inf_{m\in\mathbb{N}}|x^n-m|=0$. For some further background and recent results on the distribution of the sequence $(x^n)_{n=1}^{\infty}$ we refer the reader to \cite{A,AB,ABTY,Bak,Bug,BLR,BugMos,Dub} and the references therein. The generic behaviour of the sequence $(x^n)_{n=1}^{\infty}$ modulo one for $x>1$ is described by a well known theorem due to Koksma \cite{Koks}. This theorem states that for Lebesgue almost every $x>1$ the sequence $(x^n)_{n=1}^{\infty}$ is uniformly distributed modulo one. We are interested in determining whether analogues of Koksma's theorem hold for more general measures. More specifically, suppose $\mu$ is a Borel probability measure supported on $[1,\infty)$ that is defined ``independently" from the family of maps $\{f_{n}(x)=x^n\}_{n=1}^{\infty},$ we are interested in determining whether for $\mu$ almost every $x$ the sequence $(x^n)_{n=1}^{\infty}$ is uniformly distributed modulo one. Of course the important detail here is what exactly it means for a Borel probability measure to be independent from the family of maps $\{f_{n}(x)=x^n\}_{n=1}^{\infty}$. A natural family of measures to consider here are the self-similar measures generated by iterated function systems (defined in Section \ref{Preliminaries}). For our purposes an iterated function system will consist of a finite collection of contracting affine maps. Since for any $n\geq 2$ the map $f_{n}(x)=x^n$ is not affine, one could view the fact that self-similar measures are defined using affine maps as some sort of independence. As such the following conjecture seems plausible.

\begin{conjecture}
	\label{motivating conjecture}
Let $\mu$ be a non-atomic self-similar measure with support contained in $[1,\infty)$. Then for $\mu$ almost every $x$ the sequence $(x^n)_{n=1}^{\infty}$ is uniformly distributed modulo one.
\end{conjecture} In this paper we do not prove Conjecture \ref{motivating conjecture}. Our main contribution in this direction is Theorem \ref{power theorem} which lends significant weight to its validity. We conclude this introductory section by giving an overview of a number of related results that motivated the present work.

One of the most well known results from uniform distribution theory states that for any integer $b\geq 2,$ for Lebesgue almost every $x\in \mathbb{R}$ the sequence $(b^nx)_{n=1}^{\infty}$ is uniformly distributed modulo one (see \cite{Bug,KN}). In what follows we say that $x$ is $b$-normal if $(b^nx)_{n=1}^{\infty}$ is uniformly distributed modulo one. For an arbitrary Borel probability measure $\mu$ supported on $\mathbb{R}$ which is defined ``independently" from the dynamical system $x\to bx \bmod 1,$ it is natural to wonder whether $x$ is $b$-normal for $\mu$ almost every $x$. Just as above, the important detail here is what it means for a Borel probability measure to be independent from the dynamical system $x\to bx \bmod 1.$ The following metaconjecture encapsulates many important results in this direction.
\begin{metaconjecture}
Suppose $\mu$ is a Borel probability measure that is ``independent" from the dynamical system $x\to bx \bmod 1$. Then $\mu$ almost every $x$ is $b$-normal. 
\end{metaconjecture}  The first instances of this metaconjecture being verified are found in the papers of Cassels \cite{Cas} and Schmidt \cite{Sch}. These authors were motivated by a question of Steinhaus as to whether there exists an $x$ that is $b$-normal for infinitely many $b$ but not all $b$. They answered this question in the affirmative by proving that with respect to the natural measure on the middle third Cantor set, almost every $x$ is $b$-normal if $b$ is not a power of three. The underlying independence here comes from the middle third Cantor set being defined by similarities with contraction ratios equal to $1/3,$ and $b$ having a prime factor not equal to $3$. The current state of the art in this area are the following two theorems due to Hochman and Shmerkin \cite{HocShm}, and Dayan, Ganguly, and Weiss \cite{DGW}.

\begin{thm}{\cite[Theorem 1.4]{HocShm}}
	\label{HocShm}
Let $\{\varphi_i(x)=r_i x +t_i\}_{i\in \A}$ be an iterated function system satisfying the open set condition. Suppose $b\geq 2$ is such that $\frac{\log |r_i|}{\log b}\notin \mathbb{Q}$ for some $i\in \A,$ then for every fully supported\footnote{We say that a self-similar measure is fully supported if the corresponding probability vector $(p_i)_{i\in \A}$ satisfies $p_i>0$ for all $i\in \A$.} non-atomic self-similar measure $\mu$, $\mu$ almost every $x$ is $b$-normal.  
\end{thm}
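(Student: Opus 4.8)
The plan is to run the argument through the theory of the scenery flow and of ergodic fractal distributions (in the sense of Hochman), which is exactly the machinery designed for transferring ``fractal'' information about $\mu$ into equidistribution statements. Write $T_b(x)=bx\bmod 1$. First I would reduce, via Weyl's criterion, the assertion ``$\mu$-almost every $x$ is $b$-normal'' to: for $\mu$-almost every $x$, the empirical measures $\frac1N\sum_{n=1}^{N}\delta_{T_b^{n}x}$ converge weakly to Lebesgue measure on $[0,1)$ (testing against $e(kx)$ recovers all the exponential sums at once, and intersecting over $k\in\mathbb{Z}\setminus\{0\}$ costs only a countable union of null sets). The reason this is reachable is that $T_b^{n}$ acts on the level-$n$ $b$-adic cylinder containing $x$ as the affine magnification by $b^{n}$; hence $(T_b^n)_*\mu$, and the distribution of $b^nx\bmod 1$, is governed by how $\mu$ looks after zooming in by $b^{-n}$ at a $\mu$-typical point — i.e.\ by the scenery of $\mu$ sampled along the $b$-adic scales.

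Next I would bring in the self-similar structure. Under the open set condition, a non-atomic fully supported self-similar measure is a uniformly scaling measure: at $\mu$-almost every $x$ the magnifications of $\mu$ around $x$ equidistribute, in the Ces\`aro sense along the continuous zooming flow, for a single ergodic fractal distribution $P$, which can moreover be described explicitly from the symbolic model of the IFS. Since $\mu$ is non-atomic the IFS has at least two maps and $\dim\mu>0$, so $P$ has positive dimension. The role of the hypothesis $\log|r_i|/\log b\notin\mathbb{Q}$ for some $i$ is to destroy the only possible obstruction, namely an arithmetic resonance between the $b$-adic scales (the times $n\log b$) and the self-similar scales (the translation lengths $\log|r_i|$): writing a $\mu$-typical $x$ as $\pi(\omega_1\omega_2\cdots)$, the ``overhang'' between the scale $b^{-n}$ and the nearest self-similar scale $r_{\omega_1}\cdots r_{\omega_k}$ should equidistribute as $n\to\infty$, which follows from equidistribution of the fractional parts $\{\,n\log b/\log|r_i|\,\}$ along the renewal sequence of the coding — a Weyl / Erd\H{o}s--Kahane type count, with the open set condition supplying the quantitative separation of distinct cylinders needed to make the count honest. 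This is precisely the statement that $P$ is not ``$\log b$-arithmetic''.

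With these two facts in hand I would invoke the main equidistribution principle: a measure generating an ergodic fractal distribution of positive dimension that is not $\log b$-arithmetic satisfies $(T_b^n)_*\mu\to\mathrm{Leb}$, at least in the Ces\`aro sense. To upgrade convergence of the pushed-forward measures to $b$-normality of $\mu$-almost every individual $x$, I would not work with $\mu$ alone but simultaneously with the renormalised pieces of $\mu$ seen along its scenery, and then appeal to the pointwise ergodic theorem in the scenery-flow system together with a maximal inequality, so that the conclusion holds at $\mu$-typical base points and not merely on average.

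The two steps I expect to carry the real difficulty are, first, the non-resonance input — converting ``$\log|r_i|/\log b$ irrational for some $i$'' into genuine equidistribution of the $b$-adic sampling of the scenery, which is the delicate combinatorial heart and where the open set condition is essential — and, second, and harder, the transfer from Ces\`aro convergence of $(T_b^n)_*\mu$ to $b$-normality of $\mu$-almost every $x$. This second step cannot be done through Fourier decay of $\mu$ (which may fail entirely, e.g.\ when some $|r_i|^{-1}$ is a Pisot number), so it has to go through entropy and dimension-conservation estimates for the $b$-adic digit statistics of a $\mu$-typical point, showing those digits are asymptotically as random as the entropy of $(T_b^n)_*\mu$ allows — which the non-resonance forces to be full.
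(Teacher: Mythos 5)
First, a point of comparison: the paper does not prove this statement. Theorem \ref{HocShm} is quoted verbatim from \cite{HocShm} and serves only as motivation in the introduction, so there is no in-paper proof to measure you against; the relevant benchmark is the proof in \cite{HocShm} itself. Against that benchmark, your outline names the correct machinery: the Weyl reduction to convergence of the empirical measures $\frac{1}{N}\sum_{n\leq N}\delta_{T_b^n x}$, the fact that under the open set condition a fully supported non-atomic self-similar measure is a uniformly scaling measure generating an explicit ergodic fractal distribution of positive dimension, the reading of $\log|r_i|/\log b\notin\mathbb{Q}$ as a non-resonance (spectral) condition on that distribution, and the passage from the scenery to the $b$-adic orbit of a typical point via an ergodic theorem for the associated zoom-in process.

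The gap is that the two steps you yourself single out as carrying ``the real difficulty'' are the entire content of the theorem, and neither is carried out. (i) You assert, but do not prove, that $\mu$ is uniformly scaling and that the generated distribution is not ``$\log b$-arithmetic''; making this precise requires defining the scenery flow and the notion of generating a distribution, computing the pure-point spectrum of the generated distribution in terms of the group generated by $\{\log|r_i|\}$, and checking that the hypothesis destroys the eigenvalue at frequency $2\pi/\log b$ --- this occupies several sections of \cite{HocShm} and is where the open set condition is actually used. (ii) The upgrade from Ces\`aro convergence of $(T_b^n)_*\mu$ to $b$-normality of $\mu$-almost every \emph{individual} $x$ is the main transference theorem of \cite{HocShm}; your proposal to route it through ``entropy and dimension-conservation estimates for the $b$-adic digit statistics'' gestures in the right direction but is not an argument. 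In \cite{HocShm} this is done by passing to the $b$-adic discretisation of the scenery (a CP-process in Furstenberg's sense), applying the ergodic theorem there, and showing that any limiting $T_b$-invariant measure must have full entropy, with non-resonance entering precisely at that point. As it stands your text is a correct roadmap of the known proof, not a proof; every lemma that would need to be established is deferred.
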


\begin{thm}{\cite[Theorem 4]{DGW}}
	\label{DaGaWe}
Let $\{\varphi_i(x)=\frac{x}{b} +t_i\}_{i\in \A}$ be an iterated function system. Suppose $t_i-t_j\notin \mathbb{Q}$ for some $i,j\in \A$, then for every fully supported non-atomic self-similar measure $\mu$, $\mu$ almost every $x$ is $b$-normal.
\end{thm}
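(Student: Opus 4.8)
The plan is to establish Weyl's criterion for $\mu$-almost every $x$ by way of the Davenport--Erd\H{o}s--LeVeque variance criterion: it suffices to show that for every nonzero integer $h$,
\[
\sum_{N=1}^{\infty}\frac{1}{N}\int\left|\frac{1}{N}\sum_{n=1}^{N}e^{2\pi i hb^{n}x}\right|^{2}d\mu(x)<\infty .
\]
Expanding the square, the $N$-th summand is $\frac{1}{N^{2}}\sum_{n,m=1}^{N}\widehat{\mu}\bigl(h(b^{n}-b^{m})\bigr)$ with $\widehat{\mu}(\xi)=\int e^{2\pi i\xi x}\,d\mu(x)$; the $N$ diagonal terms contribute $O(1/N)$ and are harmless, so the whole problem reduces to controlling $\widehat{\mu}$ at the off-diagonal frequencies $h(b^{n}-b^{m})=hb^{m}(b^{q}-1)$, $q=n-m\ge 1$, and showing the resulting double sum is summable against $1/N$.

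The second step is to exploit the self-similar structure of $\widehat{\mu}$. From $\mu=\sum_{i\in\A}p_{i}(\varphi_{i})_{*}\mu$ one obtains the functional equation $\widehat{\mu}(\xi)=g(\xi)\widehat{\mu}(\xi/b)$, where $g(\xi)=\sum_{i\in\A}p_{i}e^{2\pi i\xi t_{i}}$, and hence the product formula $\widehat{\mu}(\xi)=\prod_{k\ge 0}g(\xi/b^{k})$. Splitting this product at $k=m$ and discarding the tail factors (each of modulus $\le 1$) gives
\[
\bigl|\widehat{\mu}\bigl(hb^{m}(b^{q}-1)\bigr)\bigr|\le\prod_{j=0}^{m}\bigl|g\bigl(hb^{j}(b^{q}-1)\bigr)\bigr| .
\]
Now the hypothesis enters through a single pair: fix $i_{0},j_{0}\in\A$ with $\theta:=t_{i_{0}}-t_{j_{0}}\notin\mathbb{Q}$. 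An elementary inequality gives $|g(M)|^{2}\le 1-2p_{i_{0}}p_{j_{0}}(1-\cos 2\pi M\theta)$, so $|g(M)|\le 1-\delta$ for a fixed $\delta>0$ whenever $\|M\theta\|\ge\eta$, with $\|\cdot\|$ the distance to the nearest integer and $\eta$ any fixed constant (say $\eta=1/(2b^{2})$). Since an irrational $\alpha$ cannot have all of its base-$b$ digits eventually lying in $\{0,b-1\}$ with no further switches, the set $\{j\ge 0:\|b^{j}\alpha\|\ge\eta\}$ is infinite for every irrational $\alpha$; applying this with $\alpha=h(b^{q}-1)\theta$ shows that $\#\{0\le j\le m:\|hb^{j}(b^{q}-1)\theta\|\ge\eta\}\to\infty$ as $m\to\infty$, whence the above product tends to $0$.

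The main obstacle is that this decay is too slow and too non-uniform: for Liouville-type $\theta$ the count $\#\{0\le j\le m:\|b^{j}\alpha\|\ge\eta\}$ can grow as slowly as one likes, and then $(1-\delta)^{\#\{\cdots\}}$ summed over $m\le N$ fails to be $o(N/\log N)$, so the bare Davenport--Erd\H{o}s--LeVeque estimate does not close. Overcoming this seems to require using the full measure rather than a single pair of translations: the base-$b$ digits of a $\mu$-typical point $x$, read far out, are superpositions of a number of (essentially independent) windows of the IFS coding of $x$ that grows without bound --- the growth being guaranteed precisely because $t_{i_{0}}-t_{j_{0}}$ has infinitely many nonzero base-$b$ digits --- so by a local-limit-theorem (or multiple-ergodic-theorem) argument these digits equidistribute on $(\mathbb{Z}/b)^{r}$ for each $r$. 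Equivalently, one must show that the non-homogeneous random walk $W_{n}=bW_{n-1}+bt_{i_{n}}\bmod 1$ on the torus --- which, up to an independent $\mu$-distributed term, is exactly the orbit $b^{n}x\bmod 1$ --- equilibrates to Lebesgue measure, the driving being sufficiently irrational because $b(t_{i_{0}}-t_{j_{0}})\notin\mathbb{Q}$. This is where the real work lies; once a suitably quantitative, $q$-uniform version of the equidistribution/digit-count bound is in hand, summing first over $m\le N$, then over $q\le N$, and combining with the leftover $1/N^{2}$ and the $1/N$ from the criterion produces the required convergent series.
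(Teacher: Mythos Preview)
The paper does not prove this statement: Theorem~\ref{DaGaWe} is quoted verbatim from \cite{DGW} as background, with no argument supplied here. So there is no ``paper's own proof'' to compare your attempt against.

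As for the proposal itself, you have correctly diagnosed why it does not close. The Davenport--Erd\H{o}s--LeVeque route via the product formula $\widehat{\mu}(\xi)=\prod_{k\ge 0}g(\xi/b^{k})$ gives only $|\widehat{\mu}(hb^{m}(b^{q}-1))|\le(1-\delta)^{\#\{0\le j\le m:\|hb^{j}(b^{q}-1)\theta\|\ge\eta\}}$, and for a Liouville $\theta=t_{i_{0}}-t_{j_{0}}$ that count can grow arbitrarily slowly in $m$, uniformly over $q$. No amount of rearranging the double sum over $(m,q)$ rescues the $\sum_{N}1/N$ series: this is a genuine obstruction, not a technicality. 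Your final paragraph gestures at the correct replacement idea --- studying the random walk $W_{n}=bW_{n-1}+bt_{i_{n}}\bmod 1$ on the torus and showing it equidistributes --- but stops well short of a proof; the phrases ``local-limit-theorem (or multiple-ergodic-theorem) argument'' and ``suitably quantitative, $q$-uniform version'' are placeholders for the actual content.

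For orientation: the argument in \cite{DGW} is not Fourier-analytic at all. It realises $b$-normality as equidistribution of a random walk on a homogeneous space and appeals to measure-rigidity/random-walk results in the spirit of Benoist--Quint and Simmons--Weiss \cite{SimWei}. The irrationality hypothesis $t_{i}-t_{j}\notin\mathbb{Q}$ is used to rule out invariance of the limiting measure under a proper closed subgroup, forcing it to be Haar. That machinery is what absorbs the Liouville difficulty you identified; a purely Fourier-transform argument is not known to suffice here.
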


Some other important contributions in this area include the papers by Kaufman \cite{Kau}, and Queff\'{e}lec and Ramar\'{e} \cite{QR}, who constructed Borel probability measures supported on subsets of the badly approximable numbers whose Fourier transform converges to zero polynomially fast. Kaufman has also shown that such measures exist for the $\alpha$-well approximable numbers \cite{Kau2}. The results of Kaufman \cite{Kau}, and Queff\'{e}lec and Ramar\'{e} \cite{QR}, were recently extended by Jordan and Sahlsten to a more general class of measures \cite{JorSah}. Importantly, if the Fourier transform of a Borel probability measure converges to zero sufficiently fast (polynomial speed is sufficient), then it can be shown that almost every point with respect to this measure is $b$-normal for any $b\geq 2$. In fact, by a recent result of Pollington et al. \cite{Pol}, if the Fourier transform of a Borel probability measure converges to zero sufficiently fast, then for almost every $x,$ \eqref{Uniform distribution} holds for the sequence $(b^nx)_{n=1}^{\infty}$ with an explicit error term. 

Another related result was recently proved by Simmons and Weiss \cite{SimWei}. They proved that if $X\subset \mathbb{R}$ is a self-similar set satisfying the open set condition, then with respect to the natural measure on $X$, the orbit under the Gauss map ($x\to 1/x \bmod 1$) of almost every $x$ equidistributes with respect to the Gauss measure. Here the important point is that the natural measure on $X$ is defined independently from the dynamics of the Gauss map.

One of the challenges faced when addressing Conjecture \ref{motivating conjecture} is that, at least to the best of the author's knowledge, there is no dynamical system which effectively captures the distribution of $(x^n)_{n=1}^{\infty}$ modulo one. As such one cannot rely upon techniques from Ergodic Theory to prove this conjecture. Techniques from Ergodic Theory were previously applied with great success in the proofs of Theorem \ref{HocShm} and Theorem \ref{DaGaWe}. Instead of using these techniques, our approach will exploit the fact that the maps $f_{n}(x)=x^n$ are not affine for $n\geq 2$, and the fact that self-similar measures are defined using affine maps.

\section{Statement of results}

Our main contribution in the direction of Conjecture \ref{motivating conjecture} is the following theorem. 

\begin{thm}
\label{power theorem}
Let $\{\varphi_i(x)=rx+t_i\}_{i\in \A}$ be an equicontractive iterated function system satisfying the convex strong separation condition with self-similar set $X$ contained in $[1,\infty)$. Moreover let $(p_i)_{i\in \A}$ be a probability vector  satisfying $$\frac{1}{2}<\frac{-\sum_{i\in \A}p_i\log p_i}{-\log |r|}.$$ Then with respect to the self-similar measure $\mu$ corresponding to $(p_i)_{i\in \A}$, for $\mu$ almost every $x$ the sequence $(x^n)_{n=1}^{\infty}$ is uniformly distributed modulo one.
\end{thm}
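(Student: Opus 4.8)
The plan is to use the Weyl criterion: it suffices to show that for every nonzero integer $h$, for $\mu$ almost every $x$ we have $\frac{1}{N}\sum_{n=1}^{N} e^{2\pi i h x^n} \to 0$. By a standard Borel--Cantelli/variance argument (passing to a subsequence $N_k = k^2$ and controlling fluctuations between consecutive terms), it is enough to obtain a good bound on the $L^2(\mu)$ norm $\int \bigl| \sum_{n=1}^{N} e^{2\pi i h x^n} \bigr|^2 \, d\mu(x)$, i.e.\ on $\sum_{m,n=1}^{N} \widehat{\mu}_{m,n}(h)$ where $\widehat{\mu}_{m,n}(h) = \int e^{2\pi i h (x^m - x^n)}\, d\mu(x)$. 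The diagonal terms contribute $N$; the task is to show the off-diagonal terms $m\neq n$ are small on average, which reduces to bounding the oscillatory integral $\int e^{2\pi i h(x^m - x^n)}\, d\mu(x)$ for $m > n$.

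To estimate this oscillatory integral I would exploit the self-similar structure together with the nonlinearity of $x \mapsto x^m - x^n$. Write $\mu = \sum_{i \in \A} p_i \, (\varphi_i)_* \mu$, and iterate this $\ell$ levels: $\mu = \sum_{\mathbf{i} \in \A^\ell} p_{\mathbf{i}} \, (\varphi_{\mathbf{i}})_* \mu$, where $\varphi_{\mathbf{i}}$ is a similarity with contraction ratio $|r|^\ell$ mapping $X$ into a cylinder of diameter $\sim |r|^\ell$. On each such cylinder the function $g_{m,n}(x) = x^m - x^n$ has derivative $g_{m,n}'(x) = m x^{m-1} - n x^{n-1}$, which (for $x \geq 1$, $m > n$, and $h$ fixed, on the relevant range) is comparable to $m x^{m-1}$ and hence large; more importantly its \emph{variation} across a cylinder of length $|r|^\ell$ is of order $m \cdot |r|^\ell \cdot (\max X)^{m}$. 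The strategy is: choose $\ell = \ell(m)$ so that this variation is $\gg 1$ but the cylinder is still not too small, linearize $g_{m,n}$ on each cylinder, and bound $\int_{\text{cyl}} e^{2\pi i h g_{m,n}} d\mu$ by the decay of $\widehat{\mu}$ evaluated at the (large) linearized frequency $\approx h g_{m,n}'(\text{center})$. This is where the hypothesis on the dimension enters: the condition $\tfrac{1}{2} < \dim \mu = \tfrac{-\sum p_i \log p_i}{-\log|r|}$ is precisely what is needed to guarantee (via the convex strong separation condition, which gives genuine separation of cylinders and lets one build a Cantor-like subset where $\widehat{\mu}$ has power decay, or via an $L^2$-flattening / Erd\H{o}s--Kahane type argument) that $|\widehat{\mu}(\xi)|$ decays like $|\xi|^{-\delta}$ for some $\delta > 0$ along a large enough set of scales, or at least that the relevant averaged quantity is summable. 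The convex strong separation condition ensures the cylinders $\varphi_{\mathbf i}(X)$ are pairwise disjoint with definite gaps, so the pieces do not interfere and the linearization error on each piece is controllable.

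Putting the pieces together: one shows $\bigl| \int e^{2\pi i h(x^m - x^n)} d\mu \bigr| \leq C(h) \, m^{-\beta}$ (or a similar bound with a gain that is summable after dividing by $N^2$) for some $\beta > 0$ depending on $\delta$, uniformly in $n < m$. Summing over the $O(N^2)$ off-diagonal pairs and dividing by $N^2$ yields $\int \bigl| \tfrac{1}{N}\sum_{n=1}^N e^{2\pi i h x^n}\bigr|^2 d\mu = O(N^{-1}) + o(1)$, and after the Borel--Cantelli step along $N_k = k^2$ one concludes that for $\mu$-a.e.\ $x$ the Weyl sums tend to $0$ for this fixed $h$; intersecting over the countably many $h \in \mathbb{Z}\setminus\{0\}$ finishes the proof.

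The main obstacle I anticipate is obtaining the decay estimate for the oscillatory integral $\int e^{2\pi i h(x^m - x^n)} d\mu$ with a rate that is strong enough to beat the $N^2$ off-diagonal count. The dimension hypothesis $\dim \mu > 1/2$ is the tell-tale sign that the argument is not a soft Fourier-decay argument but rather uses an $L^2$-based bound: one likely estimates a \emph{second moment} of $\widehat\mu$ over an interval of frequencies (a quantity governed by $\dim\mu$ through the energy integral $\int\!\!\int |x-y|^{-s} d\mu(x)d\mu(y) < \infty$ for $s < \dim\mu$), rather than a pointwise Fourier bound, and then uses the nonlinearity of $g_{m,n}$ to say that as $n$ ranges over $\{1,\dots,m-1\}$ the linearized frequencies $h g_{m,n}'$ are well-spread, so that an $L^2$ average over these frequencies captures the $|\xi|^{-s}$-type decay. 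Making this spreading quantitative, handling the $x$-dependence of the linearization uniformly via the strong separation condition, and checking that the exponent $s > 1/2$ exactly suffices, is the technical heart of the matter.
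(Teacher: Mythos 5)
Your reduction (Weyl criterion plus a second-moment/Borel--Cantelli argument, leading to bounds on $\int e^{2\pi i h(x^m-x^n)}\,d\mu$ for $m\neq n$) matches the paper, which uses the Davenport--Erd\H{o}s--LeVeque criterion for exactly this purpose. The genuine gap is in the core estimate. Your primary mechanism --- iterate the self-similarity to a scale where $x\mapsto x^m-x^n$ linearizes on each cylinder, then invoke decay of $\widehat{\mu}$ at the linearized frequency --- cannot work as stated, because self-similar measures satisfying the hypotheses need not have \emph{any} pointwise Fourier decay: the flagship example of the theorem is the natural measure on $C+t$ with $C$ the middle third Cantor set, whose Fourier transform does not tend to zero along the frequencies $3^k$. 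The claim that $\dim\mu>1/2$ yields $|\widehat{\mu}(\xi)|\lesssim|\xi|^{-\delta}$ is simply false here, and the fallback you sketch (an $L^2$ average of $\widehat\mu$ over the linearized frequencies $h(mx^{m-1}-nx^{n-1})$ as $n$ varies, exploiting ``spreading'') is not carried out; note also that for fixed $m$ these frequencies all cluster near $hmx^{m-1}$, so the spreading is far from the dyadic-range dispersion an energy-integral argument would require. You correctly identify this as the heart of the matter, but the proposal does not supply the missing idea.

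What the paper actually does avoids $\widehat\mu$ entirely. Writing $W_M(x)=\sum_{\a}p_{\a}\exp(l(f_n(\varphi_{\c\a}(x))-f_m(\varphi_{\c\a}(x))))$ with $M\approx \frac{n\log x_1}{-2\log r}$ (so that $W_M'$ is uniformly small), it shows that wherever $|W_M|$ exceeds a threshold on a level-$M$ cylinder it is large on the \emph{whole} cylinder; since each such cylinder has $\mu$-measure at most $e^{-M(h(\p)-\delta)}$ but Lebesgue length exactly $r^M|I|$, one can bound $\int|W_M|\,d\mu$ by $\frac{e^{M(-h(\p)+\delta)}}{r^M}\int_I|W_M|^2\,dx$ up to exponentially small errors. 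The Lebesgue $L^2$ norm is then expanded and the off-diagonal terms are killed by van der Corput's lemma, using the separation of cylinders together with the lower bound on $|f_n''-f_m''|$ (nonlinearity) for the derivative bound and the sign condition on $f_n'''-f_m'''$ for monotonicity. The diagonal term contributes $e^{M(-h(\p)+\delta)}$, and the product $\frac{e^{2M(-h(\p)+\delta)}}{r^M}$ is exponentially small precisely when $h(\p)/(-\log r)>1/2$ --- this, not Fourier decay, is where the dimension hypothesis enters. Your instinct that the proof is ``$L^2$-based'' is right, but the specific comparison of the $\mu$-integral with a Lebesgue $L^2$ norm, followed by van der Corput, is the missing step, and without it the argument does not close.
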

We define what we mean by iterated function system, self-similar set, and what it means for an iterated function system to be equicontractive and to satisfy the convex strong separation condition in Section \ref{Preliminaries}. Importantly both of these conditions are satisfied by the iterated function system $\{\phi_{1}(x)=\frac{x+2t}{3},\phi_{2}(x)=\frac{x+2+2t}{3}\}$ for any $t\in \mathbb{R}.$ The self-similar set for this iterated function system is $C+t$ where $C$ is the middle third Cantor set. Using the fact that the restriction of the $\frac{\log 2}{\log 3}$-dimensional Hausdorff measure on $C+t$ coincides with the self-similar measure corresponding to the probability vector $(p_i)_{i=1}^2=(1/2,1/2),$ we see that Theorem \ref{power theorem} immediately implies the following corollary\footnote{The restriction of the $\frac{\log 2}{\log 3}$-dimensional Hausdorff measure on $C+t$ is given by $\mu(A)=\mathcal{H}^{\frac{\log 2}{\log 3}}(A\cap (C+t)).$ Here $\mathcal{H}^{\frac{\log 2}{\log 3}}$ is the $\frac{\log 2}{\log 3}$-dimensional Hausdorff measure. For more on Hausdorff measure see \cite{Fal1}. The restriction of the $\frac{\log 2}{\log 3}$-dimensional Hausdorff measure on $C+t$ can be thought of as the natural measure on $C+t$. }.

\begin{cor}
	Let $C$ be the middle third Cantor set. Then for any $t\geq 1,$ with respect to the restriction of the $\frac{\log 2}{\log 3}$-dimensional Hausdorff measure on $C+t,$ for almost every $x$ the sequence $(x^n)_{n=1}^{\infty}$ is uniformly distributed modulo one.
\end{cor}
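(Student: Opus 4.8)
The plan is to obtain the Corollary as a direct specialization of Theorem \ref{power theorem}. Fix $t \geq 1$ and consider the two-map iterated function system
\[
\phi_1(x) = \frac{x+2t}{3}, \qquad \phi_2(x) = \frac{x+2+2t}{3},
\]
together with the probability vector $(p_1,p_2) = (1/2,1/2)$. First I would record the elementary structural facts: both maps are similarities with the common ratio $r = 1/3$, so the system is equicontractive; its attractor is $C+t$ (as already noted in the excerpt), and since $C \subseteq [0,1]$ we have $C + t \subseteq [t, t+1] \subseteq [1,\infty)$ precisely because $t \geq 1$; and the convex hulls of the two first-level cylinders are the disjoint intervals $\phi_1([t,t+1]) = [t, t+\tfrac{1}{3}]$ and $\phi_2([t,t+1]) = [t+\tfrac{2}{3}, t+1]$, so the convex strong separation condition holds.

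Next I would verify the entropy hypothesis of Theorem \ref{power theorem} for $(p_1,p_2) = (1/2,1/2)$: here $-\sum_{i=1}^{2} p_i\log p_i = \log 2$ and $-\log|r| = \log 3$, and
\[
\frac{-\sum_{i=1}^{2} p_i\log p_i}{-\log|r|} = \frac{\log 2}{\log 3} > \frac12,
\]
since $2 > \sqrt{3}$. Applying Theorem \ref{power theorem} therefore gives that, with respect to the self-similar measure $\mu$ associated with $(1/2,1/2)$, for $\mu$-almost every $x$ the sequence $(x^n)_{n=1}^{\infty}$ is uniformly distributed modulo one. To finish, I would invoke the identification recorded just before the Corollary: this $\mu$ is exactly the restriction of the $\tfrac{\log 2}{\log 3}$-dimensional Hausdorff measure to $C+t$. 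Substituting this identification into the conclusion yields the Corollary verbatim. (Since $\mu$ is non-atomic --- every point of $C+t$ carries $\mu$-mass at most $2^{-k}$ for all $k$ --- the exceptional behaviour at $x = 1 \in C+1$ in the case $t=1$ is irrelevant.)

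At this level there is no genuine obstacle: every step above is a routine verification, and the substantive content has been absorbed into Theorem \ref{power theorem}. For orientation, I would expect that theorem to be proved by combining Weyl's criterion with the Davenport--Erd\H{o}s--LeVeque criterion, which reduces the problem to an $L^2(\mu)$ estimate on the Weyl sums $\frac{1}{N}\sum_{n\le N} e^{2\pi i h x^n}$; expanding the square turns this into a double sum over pairs $(m,n)$ of the oscillatory integrals $\int_X e^{2\pi i h (x^n - x^m)}\, d\mu(x)$, and the real difficulty is to bound these integrals using the nonlinearity of $x \mapsto x^n - x^m$ on $X$ --- whose derivative is bounded away from zero and far from constant --- together with the self-similar structure of $\mu$, the condition $\dim \mu = \tfrac{-\sum_{i} p_i \log p_i}{-\log|r|} > \tfrac12$ being exactly what forces the resulting double sum to converge. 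None of this, however, is needed to deduce the Corollary, which is immediate once Theorem \ref{power theorem} is in hand.
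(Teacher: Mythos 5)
Your proposal is correct and follows exactly the paper's route: the Corollary is obtained by checking that the IFS $\{\phi_1(x)=\frac{x+2t}{3},\phi_2(x)=\frac{x+2+2t}{3}\}$ with weights $(1/2,1/2)$ satisfies the hypotheses of Theorem \ref{power theorem} (equicontractivity, convex strong separation, attractor $C+t\subset[1,\infty)$, and $\frac{\log 2}{\log 3}>\frac12$), and then identifying the resulting self-similar measure with the restricted Hausdorff measure. No further comment is needed.
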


Theorem \ref{power theorem} is implied by the following more general theorem which applies to a general class of functions.
\begin{thm}
\label{main theorem}
Let $\{\varphi_i(x)=rx+t_i\}_{i\in \A}$ be an equicontractive iterated function system satisfying the convex strong separation condition with self-similar set $X$ contained in $[1,\infty)$. Let $(f_n)_{n=1}^{\infty}$ be a sequence of functions satisfying the following properties:
\begin{enumerate}
	\item[A.] $f_{n}\in C^3(\text{conv}(X),\mathbb{R})$ for each $n.$\footnote{Here $\text{conv}(X)$ denotes the convex hull of $X$ and $C^3(\text{conv}(X),\mathbb{R})$ denotes the set of three times differentiable functions from $\text{conv}(X)$ to $\mathbb{R}$.}
	\item[B.] There exists $C_1,C_2>0$ such that for any $m,n$ with $m<n$ we have: $$|f_{n}'(x)-f_{m}'(x)|\leq C_1n^{C_2}x^{n-1}$$ for all $x\in \text{conv}(X)$.
	\item[C.] There exists $C_3>0$ such that for all $n$ sufficiently large, for any $m<n$ we have: $$|f_{n}''(x)-f_{m}''(x)|\geq C_{3}x^{n-2}$$ for all $x\in \text{conv}(X).$ 
	\item[D.] For any $m,n$ with $m<n$ we have either $$f_{n}'''(x)-f_{m}'''(x)\geq 0$$ for all $x\in \text{conv}(X),$ or $$f_{n}'''(x)-f_{m}'''(x)\leq 0$$ for all $x\in \text{conv}(X).$
\end{enumerate}  
Moreover let $(p_i)_{i\in \A}$ be a probability vector  satisfying $$\frac{1}{2}<\frac{-\sum_{i\in \A}p_i\log p_i}{-\log |r|}.$$ Then with respect to the self-similar measure $\mu$ corresponding to $(p_i)_{i\in \A}$, for $\mu$ almost every $x$ the sequence $(f_{n}(x))_{n=1}^{\infty}$ is uniformly distributed modulo one.
\end{thm}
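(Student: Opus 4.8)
\section*{Proof proposal}

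\textbf{Overview.} The plan is to prove that $(f_n(x))_{n=1}^\infty$ is uniformly distributed modulo one for $\mu$-a.e. $x$ by verifying Weyl's criterion: for every nonzero integer $k$ we must show
\[
\frac{1}{N}\sum_{n=1}^{N} e^{2\pi i k f_n(x)} \longrightarrow 0 \qquad \text{for } \mu\text{-a.e. } x.
\]
By a standard second-moment argument (Weyl, Davenport--Erd\H{o}s--LeVeque type), it suffices to obtain a bound of the form $\int \left| \frac{1}{N}\sum_{n=1}^N e^{2\pi i k f_n(x)}\right|^2 d\mu(x) = O\left( (\log N)^{-1-\delta}\right)$ along a suitable subsequence $N_j$ (e.g.\ $N_j = 2^j$), plus a routine interpolation between consecutive $N_j$; the crux is therefore to estimate the cross terms
\[
I_{m,n}(k) := \int_X e^{2\pi i k (f_n(x)-f_m(x))}\, d\mu(x), \qquad m < n,
\]
and to show $\sum_{m<n\le N} |I_{m,n}(k)|$ is $o(N^2)$, in fact $O(N^2 (\log N)^{-1-\delta})$.

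\textbf{Estimating the oscillatory integral $I_{m,n}(k)$.} This is where hypotheses A--D and the separation/entropy conditions enter. Write $g = g_{m,n} := k(f_n - f_m)$; by (C), $|g''(x)| \ge |k| C_3 x^{n-2}$ on $\mathrm{conv}(X) \subset [1,\infty)$, so $g$ has a large, non-vanishing second derivative; by (D), $g'''$ has constant sign, so $g'$ is monotone, hence $g''$ vanishes nowhere and the phase has no degenerate stationary points; by (B), $|g'(x)| \le |k| C_1 n^{C_2} x^{n-1}$, a polynomial-in-$n$, exponential-in-$n$ upper bound that controls the total variation of the phase. To exploit oscillation against the self-similar measure $\mu$ rather than Lebesgue measure, decompose $X$ using the CSSC: for a generation parameter $\ell$, partition $X$ into the $|\A|^\ell$ cylinders $X_{\mathbf{i}} = \varphi_{\mathbf{i}}(X)$ of diameter $\asymp |r|^\ell \,\mathrm{diam}(X)$, each carrying mass $p_{\mathbf{i}} = \prod p_{i_j}$. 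On each cylinder, push forward through the affine map $\varphi_{\mathbf{i}}$ and integrate against the self-similar measure at scale $|r|^\ell$; the key quantitative input is that, because of strong separation, one can use van der Corput's lemma (second-derivative test) on intervals at an intermediate scale to beat the trivial bound. The upshot should be a bound of the shape
\[
|I_{m,n}(k)| \;\ll_{k}\; \max\Bigl\{\, (n^{C_2})^{\pm 1} |r|^{\,c n} ,\; |\A|^{\ell} \cdot \bigl(|k|\,C_3\,n\text{-factor}\bigr)^{-1/2} + \bigl(\text{mass of "bad" cylinders}\bigr)\Bigr\},
\]
optimized over $\ell = \ell(n)$; choosing $\ell \asymp \alpha n$ for the right constant $\alpha$ balances the number of cylinders $|\A|^{\ell} = e^{\ell \log|\A|}$ against the second-derivative gain $x^{n-2} \ge 1$ and the mass decay, and the entropy condition $\tfrac12 < \tfrac{-\sum p_i\log p_i}{-\log|r|}$ is exactly what guarantees that the total mass on cylinders of generation $\ell$ on which the phase fails to oscillate enough is summable, i.e.\ that $\sum_{m<n} \sup_{|k|\le K}|I_{m,n}(k)|/n < \infty$ after the optimization.

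\textbf{From cross-term bounds to full measure.} Granted $\sum_{m<n\le N}|I_{m,n}(k)| = O(N^2(\log N)^{-1-\delta})$ for each fixed $k$, expand the square:
\[
\int_X \Bigl|\frac{1}{N}\sum_{n=1}^N e^{2\pi i k f_n(x)}\Bigr|^2 d\mu(x) = \frac{1}{N} + \frac{2}{N^2}\,\mathrm{Re}\!\!\sum_{m<n\le N} I_{m,n}(k) = O\!\left((\log N)^{-1-\delta}\right).
\]
Apply this along $N_j = 2^j$ so that $\sum_j (\log N_j)^{-1-\delta} < \infty$; by Borel--Cantelli / monotone convergence, $\frac{1}{N_j}\sum_{n\le N_j} e^{2\pi i k f_n(x)} \to 0$ for $\mu$-a.e.\ $x$, for each fixed $k$, hence (countable intersection over $k \in \mathbb{Z}\setminus\{0\}$) for $\mu$-a.e.\ $x$ simultaneously for all $k$. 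Finally, for $N_j \le N < N_{j+1}$ the partial sums differ by at most $N_{j+1}-N_j \ll N_j$ terms, so $\frac1N\sum_{n\le N} e^{2\pi i kf_n(x)} \to 0$ for all $N$; Weyl's criterion then gives uniform distribution mod one of $(f_n(x))_{n=1}^\infty$ for $\mu$-a.e.\ $x$.

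\textbf{Main obstacle.} The essential difficulty is the oscillatory-integral estimate for $I_{m,n}(k)$ against the \emph{singular} self-similar measure $\mu$: van der Corput's lemma is a statement about Lebesgue measure, so one must carefully transfer it to $\mu$ via the self-affine cylinder decomposition, and the optimization of the generation scale $\ell = \ell(n)$ must be executed so that the exponential loss from the number of cylinders is strictly dominated by the gain from the lower bound on $|g''|$ together with the entropy surplus over $\tfrac12$. Getting the quantitative exponents to line up — in particular extracting a power-of-$\log N$ (not merely $o(1)$) decay in the averaged second moment, which is what the Davenport--Erd\H{o}s--LeVeque route needs along a lacunary subsequence — is the technical heart of the argument, and is precisely where the strict inequality $\tfrac12 < \tfrac{-\sum p_i \log p_i}{-\log|r|}$ is used rather than equality.
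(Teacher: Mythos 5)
Your overall strategy --- Weyl's criterion plus a Davenport--Erd\H{o}s--LeVeque second-moment reduction, followed by a van der Corput estimate on the cross terms $I_{m,n}(k)$ exploiting a cylinder decomposition and the entropy condition --- is the same skeleton as the paper's argument. But the proposal stops exactly where the proof begins: the estimate of $I_{m,n}(k)$ is left as ``a bound of the shape'' with undetermined placeholders, and you yourself flag it as the unexecuted technical heart. One concrete missing idea: van der Corput's lemma is a statement about Lebesgue integrals, and you give no mechanism for applying it to the singular measure $\mu$. The paper's solution (Lemma \ref{L2 bound}, in the style of Jordan--Sahlsten) is an $L^2$-linearization: one forms $W_M(x)=\sum_{\a\in G_M}p_\a\exp(l(f_n(\varphi_{\c\a}(x))-f_m(\varphi_{\c\a}(x))))$ over ``good'' words of a carefully chosen generation $M=M(n)$, shows via the mean value theorem and hypothesis B that wherever $|W_M|$ is large it stays large throughout the corresponding cylinder, and thereby bounds $|\int W_M\,d\mu|$ by a multiple of the \emph{Lebesgue} integral $\int_I|W_M|^2\,dx$ plus exponentially small errors; only then is van der Corput applied, to the cross terms of the expanded square. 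Nothing in your outline supplies this bridge.

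More seriously, the one quantitative remark you do make --- that the optimization balances the number of cylinders against ``the second-derivative gain $x^{n-2}\ge 1$'' --- points in a direction that would fail. With only the trivial lower bound $1$, van der Corput gives a bound of order $r^{-(M+|\a\wedge\b|)}$, which is exponentially \emph{large}, and the double sum over cylinder pairs does not converge. What makes the paper's numbers work is the preliminary localization to a cylinder $I_\c$ with $\inf I_\c>1+\kappa$, so that hypothesis C yields $|f_n''-f_m''|\ge C_3x_0^{n-2}\ge C_3(1+\kappa)^{n-2}$, an exponentially large gain in $n$; combined with the choice of $M$ forced by \eqref{M property} and the entropy surplus over $1/2$, this beats all exponential losses and produces the decay $O_{\kappa,l}(\gamma^n)$ of Proposition \ref{decay prop} --- much stronger than the $(\log N)^{-1-\delta}$ you aim for, and strong enough that the DEL series converges outright. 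That extra strength matters for your reduction too: with $N_j=2^j$, consecutive normalized partial sums can differ by $O(1)$, so your ``routine interpolation'' is not routine, whereas the paper's exponential cross-term bound feeds directly into the DEL criterion with no subsequence argument. The localization away from $1$ and the subsequent patching over cylinders and over $\kappa\downarrow 0$ are entirely absent from your proposal.
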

\begin{remark}
	To see how Theorem \ref{power theorem} follows from Theorem \ref{main theorem} let $f_{n}(x)=x^n$ for all $n\geq 1$. Then for any $m,n$ with $m<n$ and $x\geq 1$ we have
	$$|f_{n}'(x)-f_{m}'(x)|=nx^{n-1}-mx^{m-1}\leq 2nx^{n-1}$$ and $$f_{n}'''(x)-f_{m}'''(x)=n(n-1)(n-2)x^{n-3}-m(m-1)(m-2)x^{m-3}\geq 0.$$ Moreover, if $n$ also satisfies $n\geq 2$ then $$|f_{n}''(x)-f_{m}''(x)|=n(n-1)x^{n-2}-m(m-1)x^{m-1}\geq (n(n-1)-m(m-1))x^{n-2}\geq x^{n-2}.$$ Therefore properties $B,$ $C$, and $D$ of Theorem \ref{main theorem} are satisfied by the sequence of functions $(f_{n}(x)=x^n)_{n=1}^{\infty}.$ Property $A$ of Theorem \ref{main theorem} is obviously satisfied by this sequence of functions. Therefore Theorem \ref{power theorem} follows from Theorem \ref{main theorem}.
\end{remark}

\begin{remark}
Note that we have deliberately phrased Theorem \ref{main theorem} with its application in the proof of Theorem \ref{power theorem} in mind. Theorem \ref{main theorem} still holds if the inequalities in property $B$ and property $C$ are replaced with the perhaps more natural inequalities: $$|f_{n}'(x)-f_{m}'(x)|\leq C_1n^{C_2}x^{n}$$ and $$|f_{n}''(x)-f_{m}''(x)|\geq C_{3}x^{n}.$$ These inequalities can be shown to be equivalent to those stated in property $B$ and property $C$ by altering the constants $C_{1}$ and $C_{3}$ appropriately. In particular, because $\text{conv}(X)$ is a compact subset of $[1,\infty)$ the extra powers of $x$ can be reconciled by altering the leading constant term.
\end{remark}
\begin{remark}
The hypotheses of Theorem \ref{main theorem} are satisfied by many sequences of functions. For instance we could take $f_{n}(x)=x^n+x^{n-1}+\cdots+x+1$ for all $n$. Alternatively we could fix a polynomial $g$ with strictly positive coefficients and let $f_{n}(x)=g(x)x^n$ for all $n$, or $f_{n}(x)=g(n)x^n$ for all $n$. Each of these sequences of functions satisfy the hypotheses of Theorem \ref{main theorem}. 

We can build further examples by taking a sequence of functions $(f_n)_{n=1}^{\infty}$ which satisfies the hypotheses of Theorem \ref{main theorem}, and a sequence of functions $(g_n)_{n=1}^{\infty}$ whose first and second derivatives grow subexponentially in $n$ and which also satisfies property $D$ with the same sign as $(f_n)_{n=1}^{\infty}.$ The sequence $(f_n+g_n)_{n=1}^{\infty}$ would then satisfy the hypotheses of Theorem \ref{main theorem}. To be more precise, we could take $(f_n)_{n=1}^{\infty}$ to be any sequence of functions satisfying the hypotheses of Theorem \ref{main theorem} where property $D$ is satisfied with positive sign, we then define a new sequence of functions $(h_{n}(x)=f_{n}(x)+n\log x)_{n=1}^{\infty}.$ The sequence $(h_n)_{n=1}^{\infty}$ then satisfies the hypotheses of Theorem \ref{main theorem} if $\text{conv}(X)\subset(1,\infty)$.


\end{remark}

The rest of this paper is organised as follows. In Section \ref{Preliminaries} we recall the necessary preliminaries from Fractal Geometry and the theory of uniform distribution. In Section \ref{Proof section} we prove Theorem \ref{main theorem}.

\section{Preliminaries}
\label{Preliminaries}
\subsection{Fractal Geometry}
We call a map $\varphi:\mathbb{R}\to\mathbb{R}$ a similarity if it is of the form $\varphi(x)=rx +t$ for some $r\in(-1,0)\cup(0,1)$ and $t\in \mathbb{R}$. We call a finite set of similarities $\{\varphi_i\}_{i\in \A}$ an iterated function systems or IFS for short. Here and throughout $\A$ denotes an arbitrary finite set. Given an IFS $\{\varphi_i(x)=r_ix+t_i\}_{i\in \A},$ we say that it is equicontractive if there exists $r\in(-1,0)\cup(0,1)$ such that $r_i=r$ for all $i\in \A$. Throughout this paper we will assume that if $\{\varphi_i\}_{i\in \A}$ is an equicontractive IFS then $r\in(0,1)$. For each of our theorems there is no loss of generality in making this assumption. This is because if $\{\varphi_i\}_{i\in \A}$ is an equicontractive IFS satisfying the convex strong separation condition, then $\{\varphi_i\circ \varphi_j\}_{(i,j)\in \A^2}$ is also an equicontractive IFS satisfying the convex strong separation condition and the contraction ratio is positive. Moreover, any self-similar measure for $\{\varphi_i\}_{i\in \A}$ can be realised as a self-similar measure for $\{\varphi_i\circ \varphi_j\}_{(i,j)\in \A^2}$.

An important result due to Hutchinson \cite{Hut} states that for any IFS $\{\varphi_i\}_{i\in \A},$ there exists a unique non-empty compact set $X$ satisfying $$X=\bigcup_{i\in \A}\varphi_i(X).$$ $X$ is called the self-similar set of $\{\varphi_i\}_{i\in \A}$. The middle third Cantor set and the von-Koch curve are well known examples of self-similar sets. Given a finite word $\a=(a_1,\ldots,a_M)\in \bigcup_{k=1}^{\infty}\A^k$ we let $$\varphi_{\a}:=\varphi_{a_1}\circ \cdots \circ \varphi_{a_M} \text{ and }X_{\a}:=\varphi_{\a}(X).$$ For distinct $\a,\b\in \A^M$ we let $$|\a\wedge\b|:=\inf\left\{1\leq k\leq M:a_k\neq b_k\right\}.$$
Given an IFS $\{\varphi_i\}_{i\in \A}$ and a probability vector $\p:=(p_i)_{i\in \A},$ there exists a unique Borel probability measure $\mu_{\p}$ satisfying \begin{equation}
\label{self-similar measure}
\mu_{\p}=\sum_{i\in \A}p_i\cdot \mu_{\p} \circ \varphi_{i}^{-1}.
\end{equation} We call $\mu_\p$ the self-similar measure corresponding to $\{\varphi_i\}_{i\in \A}$ and $\p.$ When the choice of $\p$ is implicit we simply denote $\mu_{\p}$ by $\mu$. For our purposes it is important that the relation \eqref{self-similar measure} can be iterated and for any $M\in\mathbb{N}$ the self-similar measure $\mu_{\p}$ satisfies
\begin{equation}
\label{iterated self-similar measure}
\mu_{\p}=\sum_{\a\in\A^{M}}p_{\a}\cdot \mu_{\p} \circ \varphi_{\a}^{-1},
\end{equation} where $p_{\a}=\prod_{k=1}^{M}p_{a_k}$ for $\a=(a_1,\ldots,a_M).$ Given a probability vector $\p$ we define the entropy of $\p$ to equal $$h(\p):=-\sum_{i\in \A}p_i\log p_i.$$ We emphasise that this quantity appears in the hypotheses of Theorem \ref{power theorem} and Theorem \ref{main theorem}. 

Many results in the study of self-similar sets require additional separation conditions on the IFS. Often one restricts to the case when the IFS satisfies the strong separation condition or the open set condition (see \cite{Fal1,Fal2}). In this paper we will require a slightly stronger separation condition that is still satisfied by many well known self-similar sets. Given an IFS $\{\varphi_i\}_{i\in \A}$, we say that $\{\varphi_i\}_{i\in \A}$ satisfies the convex strong separation condition if the convex hull of $X$ satisfies the following: $$\varphi_{i}(\textrm{conv}(X))\cap \varphi_{j}(\textrm{conv}(X))=\emptyset\qquad \forall i\neq j.$$ Iterated function systems satisfying the convex strong separation condition were also studied by Boore and Falconer in \cite{BooFal}. It is easy to construct iterated function systems satisfying the convex strong separation condition. For example, if we fix $r\in(0,1)$ and $\{t_i\}_{i=1}^n$ a finite set of real numbers satisfying $t_1<t_2<\cdots<t_{n},$ $1-r\leq t_{1},$ $r<t_{i+1}-t_{i}$, and $t_{n}\leq t_{1}+1-r$, then $\{\varphi_{i}(x)=rx+t_{i}\}_{i=1}^n$ is an IFS which satisfies the convex strong separation condition and whose self-similar set is contained in $[1,\infty).$

 To help with our exposition we state here an identity that will be used several times in our proof of Theorem \ref{main theorem}. Suppose $\{\varphi_i\}_{i\in \A}$ is an equicontractive IFS and $f\in C^{1}(\textrm{conv}(I),\mathbb{R})$. Then for any $\a\in \A^{M},$ it follows from the chain rule that the following equality holds
 \begin{equation}
 \label{chain rule}
 (f\circ \varphi_{\a})'(x)=r^{M}f'(\varphi_{\a}(x)).
 \end{equation}

\subsection{Uniform distribution}
To prove Theorem \ref{main theorem} we will make use of a well known criterion due to Weyl for uniform distribution in terms of exponential sums (see \cite[Theorem 1.2]{Bug} and \cite{Weyl}), and a result due to Davenport, Erd\H{o}s, and LeVeque (see \cite[Lemma 1.8]{Bug} and \cite{DEL}). Combining these results we may deduce the following statement.

\begin{prop}
	\label{Important prop}
Let $\mu$ be a Borel probability measure on $\mathbb{R}$ and $(f_n)_{n=1}^{\infty}$ be a sequence of continuous real valued functions. If for any $l\in \mathbb{Z}\setminus\{0\}$ the series
$$\sum_{N=1}^{\infty}\frac{1}{N}\int \left|\frac{1}{N}\sum_{n=1}^{N}e^{2\pi i lf_{n}(x)}\right|^2 d\mu$$
converges, then for $\mu$ almost every $x$ the sequence $(f_{n}(x))_{n=1}^{\infty}$ is uniformly distributed modulo one. 
\end{prop}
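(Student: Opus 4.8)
The plan is to assemble the conclusion from two classical pillars of uniform distribution theory, namely Weyl's exponential-sum criterion and the criterion of Davenport, Erd\H{o}s, and LeVeque, together with the trivial observation that $\mathbb{Z}\setminus\{0\}$ is countable. Recall that Weyl's criterion (\cite[Theorem 1.2]{Bug}, \cite{Weyl}) states that a sequence $(x_n)_{n=1}^{\infty}$ of real numbers is uniformly distributed modulo one if and only if $\lim_{N\to\infty}\frac{1}{N}\sum_{n=1}^{N}e^{2\pi i l x_n}=0$ for every $l\in\mathbb{Z}\setminus\{0\}$. Thus, writing $S_N^{(l)}(x):=\frac{1}{N}\sum_{n=1}^{N}e^{2\pi i lf_n(x)}$, the statement to be proved is equivalent to: for $\mu$-almost every $x$ and every $l\in\mathbb{Z}\setminus\{0\}$ one has $S_N^{(l)}(x)\to 0$ as $N\to\infty$.

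First I would fix a single $l\in\mathbb{Z}\setminus\{0\}$. Since each $f_n$ is continuous (measurability would in fact suffice), $x\mapsto S_N^{(l)}(x)$ is a bounded measurable function with $|S_N^{(l)}(x)|\le 1$ everywhere, so each integral $\int|S_N^{(l)}|^2\,d\mu$ is well defined and finite. The criterion of Davenport, Erd\H{o}s, and LeVeque, in the probabilistic form recorded in \cite[Lemma 1.8]{Bug} and \cite{DEL}, then applies verbatim: its hypothesis for the parameter $l$ is exactly $\sum_{N=1}^{\infty}\frac{1}{N}\int|S_N^{(l)}|^2\,d\mu<\infty$, which is what the Proposition assumes, and its conclusion produces a Borel set $A_l$ with $\mu(A_l)=1$ such that $S_N^{(l)}(x)\to 0$ for every $x\in A_l$. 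Concretely, the hypothesis forces $\sum_{N}\frac{1}{N}|S_N^{(l)}(x)|^2<\infty$ for $\mu$-almost every $x$, and the DEL lemma is precisely the device that upgrades this weighted summability of the squared partial Weyl sums to genuine convergence $S_N^{(l)}(x)\to 0$.

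Next I would set $A:=\bigcap_{l\in\mathbb{Z}\setminus\{0\}}A_l$. As a countable intersection of Borel sets of full $\mu$-measure, $A$ is Borel and $\mu(A)=1$. For any $x\in A$ and any $l\in\mathbb{Z}\setminus\{0\}$ we have $\frac{1}{N}\sum_{n=1}^{N}e^{2\pi i lf_n(x)}\to 0$, so Weyl's criterion, applied to the real sequence $(f_n(x))_{n=1}^{\infty}$, shows that $(f_n(x))_{n=1}^{\infty}$ is uniformly distributed modulo one. Since $\mu(A)=1$, this is exactly the assertion of the Proposition.

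A word on where the content lies. Since Weyl's criterion and the DEL lemma are classical and are being invoked as black boxes, the argument above is essentially bookkeeping; the one genuine point to respect is the order of quantifiers, namely that the hypothesis is to be used one $l$ at a time and the common almost-sure set is then obtained by a countable intersection over $\mathbb{Z}\setminus\{0\}$. If instead one wanted a self-contained treatment, the main obstacle would be proving the DEL lemma itself, i.e.\ passing from $\sum_{N}\frac{1}{N}\|S_N^{(l)}\|_{2}^{2}<\infty$ to the pointwise conclusion $S_N^{(l)}\to 0$ $\mu$-almost everywhere, which rests on a deterministic inequality controlling the Weyl sum of a unimodular sequence by a logarithmic average of its squared partial Weyl sums; but for the purposes of this paper we simply quote it.
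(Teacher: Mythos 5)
Your argument is correct and is exactly the route the paper takes: the paper derives the Proposition by combining Weyl's criterion with the Davenport--Erd\H{o}s--LeVeque lemma, applied for each fixed $l\in\mathbb{Z}\setminus\{0\}$ and then intersecting the resulting full-measure sets over the countably many $l$. The paper leaves this combination implicit, so your write-up simply makes the same bookkeeping explicit.
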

Proposition \ref{Important prop} is the tool that enables us to prove Theorem \ref{main theorem}. We will also rely on the following technical lemma due to van der Corput, for a proof of this lemma see \cite[Lemma 2.1.]{KN}

\begin{lemma}[van der Corput lemma]
	\label{van der Corput}
	Let $\phi:[a,b]\to\mathbb{R}$ be differentiable. Assume that $|\phi'(x)|\geq \gamma$ for all $x\in [a,b]$, and $\phi'$ is monotonic on $[a,b]$. Then $$\left|\int_{a}^be^{2\pi i\phi(x)}\, dx\right|\leq \gamma^{-1}.$$
\end{lemma}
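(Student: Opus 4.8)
The plan is to reduce the estimate to the classical second mean value theorem for integrals. First I would write $e^{2\pi i \phi(x)} = \frac{1}{2\pi i}\cdot\frac{(e^{2\pi i\phi(x)})'}{\phi'(x)}$, which is valid because $\phi'$ is assumed nonvanishing on $[a,b]$ (indeed $|\phi'|\geq\gamma>0$, and I would first dispose of the trivial case $\gamma=0$, where the bound $\gamma^{-1}=\infty$ is vacuous). This gives
\begin{equation*}
\int_a^b e^{2\pi i\phi(x)}\,dx = \frac{1}{2\pi i}\int_a^b \frac{1}{\phi'(x)}\,d\!\left(e^{2\pi i\phi(x)}\right).
\end{equation*}
Since $\phi'$ is monotonic, the function $x\mapsto 1/\phi'(x)$ is real-valued and monotonic on $[a,b]$, so I can apply the second mean value theorem for (Riemann–Stieltjes) integrals: there exists $\xi\in[a,b]$ with
\begin{equation*}
\int_a^b \frac{1}{\phi'(x)}\,d\!\left(e^{2\pi i\phi(x)}\right) = \frac{1}{\phi'(a)}\int_a^{\xi} d\!\left(e^{2\pi i\phi(x)}\right) + \frac{1}{\phi'(b)}\int_{\xi}^b d\!\left(e^{2\pi i\phi(x)}\right),
\end{equation*}
where the two boundary weights are $1/\phi'(a)$ and $1/\phi'(b)$ because $1/\phi'$ is monotone and one endpoint value gets picked up on each subinterval. (To be careful about applying a real-variable mean value theorem to a complex integrand, I would split $e^{2\pi i\phi}$ into real and imaginary parts, apply the theorem to each, and recombine; alternatively use the Stieltjes form directly since the integrator is of bounded variation.)

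Next I evaluate the inner integrals: $\int_a^{\xi} d(e^{2\pi i\phi(x)}) = e^{2\pi i\phi(\xi)} - e^{2\pi i\phi(a)}$ and similarly $\int_{\xi}^b d(e^{2\pi i\phi(x)}) = e^{2\pi i\phi(b)} - e^{2\pi i\phi(\xi)}$. Each of these has modulus at most $2$. Therefore
\begin{equation*}
\left|\int_a^b e^{2\pi i\phi(x)}\,dx\right| \leq \frac{1}{2\pi}\left(\frac{2}{|\phi'(a)|} + \frac{2}{|\phi'(b)|}\right) \leq \frac{1}{2\pi}\cdot\frac{4}{\gamma} = \frac{2}{\pi\gamma} \leq \frac{1}{\gamma},
\end{equation*}
using $|\phi'(a)|,|\phi'(b)|\geq\gamma$ in the middle step and $2/\pi < 1$ at the end. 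This proves the claimed bound $\gamma^{-1}$ (indeed with room to spare).

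The only genuine subtlety — and the step I would be most careful about — is the rigorous application of the second mean value theorem to a complex-valued integrand with a monotone real weight; everything else is a routine integration-by-parts manipulation. Splitting into real and imaginary parts handles this cleanly, at the cost of a slightly weaker numerical constant in each piece, but since the target constant $\gamma^{-1}$ is far from sharp this is harmless. An alternative route that avoids the mean value theorem altogether is to integrate by parts directly, $\int_a^b e^{2\pi i\phi} = \big[\tfrac{e^{2\pi i\phi}}{2\pi i\phi'}\big]_a^b + \tfrac{1}{2\pi i}\int_a^b e^{2\pi i\phi}\,\tfrac{\phi''}{(\phi')^2}\,dx$, then bound the boundary term by $\tfrac{1}{\pi\gamma}$ and, using that $\phi''/(\phi')^2 = -(1/\phi')'$ does not change sign (monotonicity of $\phi'$), bound the integral term by $\tfrac{1}{2\pi}\int_a^b |(1/\phi')'|\,dx = \tfrac{1}{2\pi}|1/\phi'(b) - 1/\phi'(a)| \leq \tfrac{1}{2\pi}\cdot\tfrac{2}{\gamma}$; this requires $\phi\in C^2$, which is why the mean-value-theorem argument (needing only differentiability and monotonicity of $\phi'$) is preferable for the stated hypotheses. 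Either way the conclusion follows.
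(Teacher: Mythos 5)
The paper does not actually prove this lemma --- it quotes it from Kuipers--Niederreiter \cite{KN} --- and your argument is precisely the standard proof given there: write the integral as $\frac{1}{2\pi i}\int_a^b\frac{1}{\phi'(x)}\,d\bigl(e^{2\pi i\phi(x)}\bigr)$ and apply the second mean value theorem to the monotone weight $1/\phi'$. The argument is correct, but two points you wave through deserve more care. First, monotonicity of $1/\phi'$ requires $\phi'$ to have constant sign; this follows from $|\phi'|\ge\gamma>0$ together with the Darboux (intermediate value) property of derivatives, not from monotonicity of $\phi'$ alone (a monotone function can jump across $0$ without vanishing, and then $1/\phi'$ would fail to be monotone). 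Second, your final numerical display is computed as if a single $\xi$ worked for the complex integrand; once you split into real and imaginary parts as you say you will, each part is bounded by $4/\gamma$ (two brackets of modulus at most $2$, two weights of modulus at most $1/\gamma$), and the crude estimate $|z|\le|\mathrm{Re}\,z|+|\mathrm{Im}\,z|$ then gives $\frac{1}{2\pi}\cdot\frac{8}{\gamma}=\frac{4}{\pi\gamma}>\frac{1}{\gamma}$, so the proof would appear to fail. You must use the Euclidean modulus, $|A+iB|=\sqrt{A^2+B^2}\le\frac{4\sqrt2}{\gamma}$, which yields $\frac{2\sqrt2}{\pi\gamma}\approx\frac{0.90}{\gamma}\le\frac{1}{\gamma}$: the bound survives, but only just, so ``harmless'' undersells the required bookkeeping. (Using Bonnet's one-sided form of the second mean value theorem, legitimate here since $1/\phi'$ has constant sign, gives a single term per part and the comfortable constant $\sqrt2/\pi$.) Your observation that the integration-by-parts alternative needs $\phi\in C^2$, which is not assumed, is correct and is exactly why the mean value theorem route is the right one.
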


\noindent \textbf{Notation.} Throughout this paper we will use $\exp(x)$ to denote $e^{2\pi i x}.$ Given two complex valued functions $f$ and $g$, we write $f=\O(g)$ if there exists $C>0$ such that $|f(x)|\leq C|g(x)|$ for all $x$. If the underlying constant depends upon some parameter $s,$ and we want to emphasise this dependence, we write $f=\O_{s}(g)$. Given an interval $I$ we let $|I|$ denote the Lebesgue measure of $I$.

\section{Proof of Theorem \ref{main theorem}}
\label{Proof section}
Let us now fix an IFS $\{\varphi_{i}\}_{i\in \A}$, a probability vector $\p,$ and a sequence of functions $(f_n)_{n=1}^{\infty}$ so that the hypotheses of Theorem \ref{main theorem} are satisfied. We let $\mu$ denote the self-similar measure corresponding to $\p$. Recall that $r$ denotes the contraction ratio of the elements of $\{\varphi_{i}\}_{i\in \A}$, and $X$ denotes the corresponding self-similar set. In what follows we let $$I:=\textrm{conv}(X).$$ Moreover, given a word $\a\in \cup_{k=1}^{\infty}\A^k$ we let $I_{\a}:=\varphi_{\a}(I).$

Recall that $X\subset [1,\infty)$. For technical reasons it is useful to restrict our arguments to subsets of $X$ that are a uniform distance away from $1$.  With this in mind we let the parameter $\kappa>0$ denote any small real number such that $1+\kappa\notin X.$ It follows from the convex strong separation condition that $\kappa$ exists and can be taken to be arbitrarily small.
Given such a $\kappa>0,$ we fix $\delta_{\kappa}>0$ to be any sufficiently small real number so that if we let
\begin{align*}
\Gamma_{\kappa}:=\max\Bigg\{ &r^{\delta_{\kappa}},\frac{1}{r^{2\delta_{\kappa}}}\left(\frac{e^{2(-h(\p)+\delta_{\kappa})}}{r}\right)^{\frac{\log(1+\kappa)}{-2\log r}},\frac{1+\delta_{\kappa}}{r^{3\delta_{\kappa}}}\left(\frac{e^{2(-h(\p)+\delta_{\kappa})}}{r}\right)^{\frac{\log(1+\kappa)}{-2\log r}},\\
&\frac{1+\delta_{\kappa}}{r^{3\delta_{\kappa}}}\left(\frac{e^{-h(\p)+\delta_{\kappa}}}{r^{\delta_{\kappa}}}\right)^{\frac{\log (1+\kappa)}{-2\log r}}\Bigg\},
\end{align*}then $$\Gamma_{\kappa}<1.$$ Such a $\delta_{\kappa}>0$ exists because of our underlying assumption $$\frac{1}{2}<\frac{h(\p)}{-\log r},$$ which is equivalent to $$\frac{e^{-2h(\p)}}{r}<1.$$ Moreover given such a $\kappa,$ and $\delta_{\kappa}$ chosen to be sufficiently small so that the above is satisfied, we fix $N_{\kappa}$ to be any sufficiently large natural number so that $$\max_{\a\in \A^{N_{\kappa}}}\sup_{x,y\in I_{\a}}\frac{x}{y}<1+\delta_{\kappa},$$ and for any $\a\in \A^{N_{\kappa}}$ we have either $$\sup I_{\a}<1+\kappa \textrm{ or }\inf I_{\a}>1+\kappa.$$ Such an $N_{\kappa}$ exists because $1+\kappa\notin X$ and $X$ is compact. 

Given a word $\c\in \cup_{k=1}^{\infty}\A^{k}$ we let $$\tmu_{\c}:=\frac{\mu|_{X_{\c}}}{\mu(X_{\c})}.$$ It is a consequence of the convex strong separation condition that $\tmu_{\c}=\mu\circ \varphi_{\c}^{-1}$. We will use this equality during our proof of Theorem \ref{main theorem}. 

It is a consequence of the following proposition that we can use Proposition \ref{Important prop} to prove Theorem \ref{main theorem}. 


\begin{prop}
	\label{decay prop}
Assume that $\{\varphi_i\}_{i\in \A}$, $\p,$ and $(f_n)_{n=1}^{\infty}$ satisfy the hypotheses of Theorem \ref{main theorem}. Then for any $\kappa>0$ such that $1+\kappa\notin X,$ there exists $\gamma:=\gamma(\kappa,\p)\in(0,1)$ such that for any $l\in \mathbb{Z}\setminus\{0\},$ $n>m,$ and $\c\in\A^{N_{\kappa}}$ satisfying $\inf I_{\c}>1+\kappa$, we have
	$$\int \exp(l(f_{n}(x)-f_{m}(x)))\, d\tmu_{\c}=\O_{\kappa,l}(\gamma^n).$$
\end{prop}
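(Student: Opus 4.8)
The plan is to estimate the integral $\int \exp\bigl(l(f_n(x)-f_m(x))\bigr)\,d\tmu_\c$ by exploiting the self-similar structure of $\mu$ together with the nonlinearity encoded in properties B--D. Writing $g = g_{n,m} := l(f_n - f_m)$, the key idea is to iterate the relation \eqref{iterated self-similar measure}: since $\tmu_\c = \mu\circ\varphi_\c^{-1}$, we have for any level $M$ that $\int \exp(g)\,d\tmu_\c = \sum_{\a\in\A^M} p_\a \int \exp\bigl(g\circ\varphi_\c\circ\varphi_\a\bigr)\,d\mu$. The point of descending to level $M$ (chosen proportional to $n$, so that $r^M$ is comparable to a fixed negative power of $n$ or an exponential in $n$) is that on each tiny cylinder $I_{\c\a}$ the composed phase $g\circ\varphi_\c\circ\varphi_\a$ becomes, after the chain rule \eqref{chain rule}, a function whose derivative is $r^{N_\kappa+M}g'(\varphi_{\c\a}(x))$ --- exponentially small --- but whose \emph{second} derivative, by property C, is bounded \emph{below} by something like $r^{2M}\cdot C_3 x^{n-2}$, which for the right choice of $M$ still dominates. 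So on each cylinder we are integrating an oscillatory exponential whose phase has a controlled nonvanishing second derivative.

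The main analytic step is therefore a van der Corput type estimate. The standard van der Corput lemma (Lemma \ref{van der Corput}) as stated only handles first derivatives bounded below; to use a \emph{second}-derivative lower bound I would apply the classical second-derivative van der Corput estimate, or derive it from Lemma \ref{van der Corput} by splitting $I_{\c\a}$ at the (unique, by monotonicity of $g'$, which follows from the constant sign of $g'''$ in property D) point where $g\circ\varphi_{\c\a}$ has an extremum: away from that point the first derivative is bounded below in terms of the distance to it and the second-derivative lower bound, and one optimizes the split. This gives a bound of the shape $\bigl|\int_{I_{\c\a}}\exp(g\circ\varphi_{\c\a})\bigr| = \O\bigl((r^{2M} x^{n})^{-1/2}\bigr)$ (absorbing the bounded powers of $x$ and the constants $C_2, C_3, l, \kappa$; here property D ensures $g'''$, hence $(g\circ\varphi_{\c\a})'$, is monotonic so the lemma applies, while property B controls the size of $g'$). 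Here it is essential that $\inf I_\c > 1+\kappa$, so that $x^n \geq (1+\kappa)^n$ grows genuinely exponentially; this is why the hypothesis on $\c$ is imposed and why the parameter $\kappa$ was introduced. Combining the per-cylinder bound with $\sum_{\a\in\A^M} p_\a \leq \bigl(\max_i p_i\bigr)^M \cdot |\A|^M$ --- or more sharply using entropy, $\sum_\a p_\a^{1}$ weighted against the geometric factor --- and choosing $M \approx c\,n$ for a suitable small constant $c$ (balancing the exponential decay $(1+\kappa)^{n/2}$ coming from $x^n$ against the exponential loss $r^{-M}$ and the entropy loss $e^{M(h(\p)+\varepsilon)}$), one arrives at the geometric bound $\O_{\kappa,l}(\gamma^n)$. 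The precise bookkeeping is exactly what the quantity $\Gamma_\kappa$ and the constants $\delta_\kappa, N_\kappa$ defined just before the proposition are designed to package: the four terms in the max defining $\Gamma_\kappa$ correspond to the competing exponential rates, and $\Gamma_\kappa < 1$ (guaranteed by $e^{-2h(\p)}/r < 1$, i.e. the entropy hypothesis) is what makes $\gamma < 1$ achievable.

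I would organize the proof as follows. First, fix $M = M(n)$ and expand $\int\exp(g)\,d\tmu_\c$ over level-$M$ cylinders via \eqref{iterated self-similar measure} and $\tmu_\c = \mu\circ\varphi_\c^{-1}$. Second, on a single cylinder $I_{\c\a}$, compute the derivatives of $g\circ\varphi_{\c\a}$ using \eqref{chain rule}, invoke property D to get monotonicity of the first derivative, property C to get the second-derivative lower bound, and property B to bound $|g'|$, then run the second-derivative van der Corput argument to bound $\bigl|\int_{I_{\c\a}}\exp(g\circ\varphi_{\c\a})\,d\text{Leb}\bigr|$. Third --- and this requires care --- pass from the Lebesgue integral over $I_{\c\a}$ to the integral against $\mu$ (equivalently $\tmu_{\c\a}$) on that cylinder; here one integrates by parts or uses the Koksma--Hlawka/bounded-variation-type estimate, controlling the total variation of $\exp(g\circ\varphi_{\c\a})$ on $I_{\c\a}$ by the length of its derivative's range, which is again exponentially small. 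Fourth, sum the cylinder bounds against the weights $p_\a$, substitute $M \approx c\,n$, and identify the resulting rate with a power of $\Gamma_\kappa$, setting $\gamma$ to be that (or a slightly larger) number in $(0,1)$. The step I expect to be the genuine obstacle is the third one: transferring the oscillatory-integral bound from Lebesgue measure to the self-similar measure $\mu$, since $\mu$ is singular and has no density, so one cannot directly apply van der Corput to $d\mu$. The resolution is to recurse --- the bound against $\mu$ on $I_{\c\a}$ is obtained by applying the \emph{same} descent one more level and using an integration-by-parts/summation-by-parts identity --- and making this recursion close up cleanly, with all constants uniform in $\a$ and the dependence on $n, l, \kappa$ tracked correctly, is where the bulk of the technical work lies.
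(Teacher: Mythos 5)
Your outline gets the first move right (expand $\tmu_{\c}$ over level-$M$ cylinders via \eqref{iterated self-similar measure} and \eqref{chain rule}, with $M$ proportional to $n$), and you correctly identify the real obstacle: van der Corput controls Lebesgue integrals, whereas the integral in Proposition \ref{decay prop} is against the singular measure $\tmu_{\c}$. But you do not resolve that obstacle. The resolution you sketch --- a Koksma--Hlawka/bounded-variation comparison plus a recursion ``one more level down'' --- does not work as stated: $\mu$ and Lebesgue measure are mutually singular, so there is no discrepancy bound to feed into Koksma--Hlawka, and the total variation of $\exp(g\circ\varphi_{\c\a})$ on a cylinder is of the order of the number of oscillations of the phase there, which is not small for the choice of $M$ you need. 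You acknowledge that closing the recursion is ``where the bulk of the technical work lies''; that work is precisely the content of the proof, so as it stands the argument has a genuine gap at its central step. (A smaller slip: constant sign of $f_n'''-f_m'''$ (property D) gives monotonicity of $g''$, not of $g'$; the single sign change of $g'$ you invoke comes instead from property C.)

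The paper closes the gap by a mechanism quite different from a per-cylinder second-derivative van der Corput estimate. In Lemma \ref{L2 bound} (following Jordan--Sahlsten) one forms $W_M(x)=\sum_{\a\in G_M}p_{\a}\exp(l(f_n(\varphi_{\c\a}(x))-f_m(\varphi_{\c\a}(x))))$, with $G_M$ a Hoeffding-good set of words controlling all the weights $p_{\a}$ by $e^{k(-h(\p)+\delta_\kappa)}$; property B and the choice of $M$ force $W_M$ to be nearly constant on each level-$M$ cylinder, which converts $\int|W_M|\,d\mu$ into a bound by $\frac{e^{M(-h(\p)+\delta_\kappa)}}{|I|\,r^{M+2\delta_\kappa n}}\int_I|W_M|^2\,dx$ plus exponentially small errors --- that is the passage from the singular measure to a Lebesgue integral. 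The oscillation is then exploited only in the \emph{cross terms} of the expanded $L^2$ norm (Lemma \ref{Lebesgue integral bound}): for $\a\neq\b$ the difference phase $\phi(x)=l\bigl(f_n(\varphi_{\c\a}(x))-f_m(\varphi_{\c\a}(x))-f_n(\varphi_{\c\b}(x))+f_m(\varphi_{\c\b}(x))\bigr)$ has, by the mean value theorem applied to $f_n'-f_m'$ together with the separation $|\varphi_{\c\a}(x)-\varphi_{\c\b}(x)|\geq c_0r^{N_\kappa+|\a\wedge\b|}$, a \emph{first} derivative bounded below by $c_0C_3|l|\,r^{2N_\kappa+M+|\a\wedge\b|}x_0^{n-2}$, and property D makes $\phi'$ monotone, so the stated first-derivative van der Corput lemma (Lemma \ref{van der Corput}) applies directly. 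Thus property C is used as a first-derivative lower bound for differences of phases between distinct cylinders, not as a second-derivative input on a single cylinder, and the entropy hypothesis enters through the diagonal term $\sum_{\a\in G_M}p_{\a}^2$ and the weighted sum over $|\a\wedge\b|$.
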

We now include the short argument explaining how Theorem \ref{main theorem} follows from Proposition \ref{decay prop}.

\begin{proof}[Proof of Theorem \ref{main theorem}]
It will be shown below that Proposition \ref{decay prop} implies that for any $\kappa>0$ such that $1+\kappa\notin X,$ if $\c\in\A^{N_{\kappa}}$ is such that $\inf I_{\c}>1+\kappa,$ then for $\tmu_{\c}$ almost every $x$ the sequence $(f_{n}(x))_{n=1}^{\infty}$ is uniformly distributed modulo one. It then follows from the definition of $N_{\kappa}$ and the self-similarity of $\mu$ (i.e. \eqref{iterated self-similar measure}), that this statement implies that for $\mu$ almost every $x>1+\kappa$ the sequence $(f_{n}(x))_{n=1}^{\infty}$ is uniformly distributed modulo one. Since there exists arbitrarily small $\kappa>0$ satisfying $1+\kappa\notin X,$  we may conclude that for $\mu$ almost every $x>1$ the sequence $(f_{n}(x))_{n=1}^{\infty}$ is uniformly distributed modulo one. Since $\mu(\{1\})=0$ Theorem \ref{main theorem} follows. To complete our proof of Theorem \ref{main theorem} it suffices to show that our initial statement is true.
	 
 Let us now fix $\kappa>0$ such that $1+\kappa\notin X$ and $\c\in \A^{N_{\kappa}}$ such that $\inf I_{\c}>1+\kappa.$ By Proposition \ref{Important prop}, to prove that for $\tmu_{\c}$ almost every $x$ the sequence $(f_{n}(x))_{n=1}^{\infty}$ is uniformly distributed modulo one, it suffices to show that for any $l\in \mathbb{Z}\setminus\{0\}$ we have
 \begin{equation}
 \label{suffices to show}
 \sum_{N=1}^{\infty}\frac{1}{N}\int \left|\frac{1}{N}\sum_{n=1}^{N}\exp(lf_{n}(x))\right|^2d\tmu_{\c}(x)<\infty.
 \end{equation}
 Expanding this expression we obtain
 \begin{align}
 \label{Expanded}
 &\sum_{N=1}^{\infty}\frac{1}{N}\int \left|\frac{1}{N}\sum_{n=1}^{N}\exp(lf_{n}(x))\right|^2d\tmu_{\c}(x)\nonumber\\
 =&\sum_{N=1}^{\infty}\left(\frac{1}{N^2}+\frac{1}{N^3}\sum_{\stackrel{1\leq n,m\leq N}{n\neq m}}\int \exp(l(f_{n}(x)-f_{m}(x)))\, d\tmu_{\c}\right).
 \end{align}
 The $1/N^2$ term appearing in \eqref{Expanded} does not affect the convergence properties of this series. As such it suffices to consider the remaining terms, which we can rewrite as 
 \begin{align}
\label{expand2}\sum_{N=1}^{\infty}\frac{1}{N^3}\sum_{\stackrel{1\leq n,m\leq N}{n\neq m}}\int \exp(l(f_{n}(x)-f_{m}(x)))d\tmu_{\c}
 =&\sum_{N=1}^{\infty}\frac{1}{N^3}\sum_{n=2}^{N}\sum_{m=1}^{n-1}\int \exp(l(f_{n}(x)-f_{m}(x)))\, d\tmu_{\c}\\
 +&\overline{\sum_{N=1}^{\infty}\frac{1}{N^3}\sum_{n=2}^{N}\sum_{m=1}^{n-1}\int \exp(l(f_{n}(x)-f_{m}(x)))\, d\tmu_{\c}}\nonumber.
 \end{align} Substituting the bound provided by Proposition \ref{decay prop} into \eqref{expand2} we obtain
 \begin{align*}
 \left|\sum_{N=1}^{\infty}\frac{1}{N^3}\sum_{\stackrel{1\leq n,m\leq N}{n\neq m}}\int \exp(l(f_{n}(x)-f_{m}(x)))d\tmu_{\c}\right|&= \O_{\kappa,l}\left(\sum_{N=1}^{\infty}\frac{1}{N^3}\sum_{n=2}^{N}\sum_{m=1}^{n-1}\gamma^n\right)\\
 &=\O_{\kappa,l}\left(\sum_{N=1}^{\infty}\frac{1}{N^3}\sum_{n=2}^{N}n\gamma^n\right)\\
 &=\O_{\kappa,l}\left(\sum_{N=1}^{\infty}\frac{1}{N^3}\right)\\
 &<\infty.
 \end{align*}In the penultimate line in the above, we have used the fact that $\sum_{n=2}^{N}n\gamma^n$ can be bounded above by a constant independent of $N$. We see that \eqref{suffices to show} now holds for any $l\in \mathbb{Z}\setminus\{0\}$ and our proof is complete.
\end{proof}

\subsection{Proof of Proposition \ref{decay prop}}
Throughout the rest of this section the parameter $\kappa$ is fixed. We assume that $\delta_{k}$ and $N_{\kappa}$ have been chosen so that the properties stated at the start of this section are satisfied. We also fix a word $\c\in \A^{N_{\kappa}}$ satisfying $\inf X_{\c}>1+\kappa$. We start our proof of Proposition \ref{decay prop} by defining several objects and collecting some useful estimates.

We let $x_0$ and $x_1$ be such that $$I_{\c}=[x_0,x_1].$$ Recall that by the definition of $N_{\kappa}$ we have 
\begin{equation}
\label{same growth}
\frac{x_1}{x_0}<1+\delta_{\kappa}.
\end{equation}Given $l\in \mathbb{Z}\setminus\{0\}$ and $n\in \mathbb{N}$ we define $$M=M(\c,l,\kappa,n):=\left \lfloor 1+\frac{\log 2\pi C_1|l||I|+C_2\log n+(n-1)\log x_1}{-2\log r}\right \rfloor+\delta_k n.$$ Importantly $M$ has the property that 
\begin{equation}
\label{M property}
r^{\delta_{\kappa} n +N_{\kappa}+2}\leq 2\pi C_{1}|l||I|n^{C_2}x_1^{n-1}r^{N_{\kappa}+2M}\leq r^{\delta_{\kappa} n +N_{\kappa}}.
\end{equation}Given $k\in \mathbb{N}$ we let $$B(k):=\left\{\a\in\A^{k}: p_{\a}\geq e^{k(-h(\p)+\delta_{\kappa})}\right\}.$$ It follows from a well known large deviation result due to Hoeffding \cite{Hoe} that for any $k\in\mathbb{N}$ there exists $\eta:=\eta(\kappa,\p)>0$ such that
\begin{equation}
\label{Hoeffding bound}
\sum_{\a\in B(k)}p_{\a}\leq e^{-\eta k}.
\end{equation} For $M$ as above we define $$G_{M}:=\left\{\a\in \A^{M}:(a_1,\ldots a_k)\notin B(k),\, \forall \lfloor \delta_{\kappa} M\rfloor \leq k \leq M\right\}.$$ It follows from \eqref{Hoeffding bound} and properties of geometric series that 
\begin{equation}
\label{good word bound}
\sum_{\stackrel{\a\in \A^{M}}{\a\notin G_{M}}}p_{\a}=\O_{\kappa}(e^{-\eta\delta_{\kappa} M}).
\end{equation} Given $m<n$ we define the function $$W_{M}(x):=\sum_{\a\in G_{M}}p_{\a}\exp(l(f_{n}(\varphi_{\c\a}(x))-f_{m}(\varphi_{\c\a}(x)))).$$ The proof of the following lemma is inspired by the proof of Lemma 6.1 from \cite{JorSah}. This lemma essentially allows us to bound from above the integral appearing in Proposition \ref{decay prop} by the $L^2$ norm of $W_{M}$ multiplied by a term that grows exponentially with $n$.


\begin{lemma}
\label{L2 bound}Let $m<n$ and $l\in \mathbb{Z}\setminus\{0\}.$ For $M$ as defined above we have
$$\left|\int \exp(l(f_{n}(x)-f_{m}(x)))\,d\tmu_{\c}\right|\leq   \frac{e^{M(-h(\p)+\delta_{\kappa})}}{|I|\cdot r^{M+2\delta_{\kappa}n}}\int_{I}|W_{M}(x)|^2\, dx + \O_{\kappa}(r^{\delta_{\kappa}n}+e^{-\eta\delta_{\kappa} M}).$$
\end{lemma}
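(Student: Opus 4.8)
The plan is to prove Lemma \ref{L2 bound} by a two–stage reduction: first replace the measure $\tmu_{\c}$ by an average over the pieces at level $M$ (using the self-similarity \eqref{iterated self-similar measure}), then throw away the ``bad'' words and express what remains in terms of $W_M$.

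First I would use the identity $\tmu_{\c}=\mu\circ\varphi_{\c}^{-1}$ together with the iterated self-similarity relation \eqref{iterated self-similar measure} applied at level $M$ to write
\begin{align*}
\int \exp(l(f_{n}(x)-f_{m}(x)))\,d\tmu_{\c}
&=\int \exp(l(f_{n}(\varphi_{\c}(x))-f_{m}(\varphi_{\c}(x))))\,d\mu(x)\\
&=\sum_{\a\in\A^{M}}p_{\a}\int \exp(l(f_{n}(\varphi_{\c\a}(x))-f_{m}(\varphi_{\c\a}(x))))\,d\mu(x).
\end{align*}
Splitting the sum over $\a\in\A^{M}$ according to whether $\a\in G_{M}$ and bounding $|\exp(\cdot)|=1$, the contribution of the words $\a\notin G_{M}$ is $\O_{\kappa}(e^{-\eta\delta_{\kappa}M})$ by \eqref{good word bound}. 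Hence, up to that error term, it suffices to bound $\int W_M\,d\mu$ in absolute value, where now I would swap the integral and the finite sum over $G_M$ so that $\int W_M\,d\mu=\sum_{\a\in G_M}p_\a\int \exp(l(f_n(\varphi_{\c\a}(x))-f_m(\varphi_{\c\a}(x))))\,d\mu(x)$. The point of passing to level $M$ is that $M$ has been calibrated (via \eqref{M property}) so that on each piece $I_{\c\a}$ the oscillation of $l(f_n-f_m)$ is controlled in a way that will later (in the rest of the proof of Proposition \ref{decay prop}) let the van der Corput lemma be applied; but for the present lemma only the measure-theoretic bookkeeping is needed.

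Next I would replace the integral of $W_M$ against $\mu$ by an integral against normalised Lebesgue measure on $I$. Since $\mu$ is supported on $X\subset I$ and $|W_M|\le 1$ pointwise (being an average of unimodular terms with weights summing to at most $1$), one cannot simply compare $\mu$ and Lebesgue measure directly; instead the standard trick is Cauchy–Schwarz on a further level of subdivision. Concretely, write $\int_I W_M\,d\mu$ as $\sum_{\a\in G_M}\big(p_\a\int \exp(\cdots)\,d\mu\big)$ and observe $\mu(X_{\c\a})\ge p_{\c\a}=p_{\c}p_{\a}$-type comparisons let each $p_\a$ be absorbed; more to the point, the definition of $G_M$ forces $p_\a< e^{M(-h(\p)+\delta_\kappa)}$ (since the full word $\a$ of length $M$ is not in $B(M)$), so $p_\a\le e^{M(-h(\p)+\delta_\kappa)}$. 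Using $|W_M|\le \big(\#G_M\big)^{1/2}\big(\sum_{\a\in G_M}p_\a^2|\exp(\cdots)|^2\big)^{1/2}$ is too lossy; the cleaner route, matching the stated constant, is: bound $\big|\int_I W_M\,d\mu\big|\le \int_I |W_M|\,d\mu$, then use Cauchy–Schwarz together with the fact that on each of the finitely many cylinders the density of $\mu$ with respect to $\tfrac{1}{|I_{\c\a}|}\mathrm{Leb}|_{I_{\c\a}}$ can be compared after pulling everything back by $\varphi_{\c}$; the Jacobian factors $r^{M}$ produced by the change of variables $\varphi_{\c\a}$ together with $|I_{\c\a}|=|I|r^{N_\kappa+M}$ account for the $|I|\cdot r^{M+2\delta_\kappa n}$ in the denominator, and the weight bound $p_\a\le e^{M(-h(\p)+\delta_\kappa)}$ produces the factor $e^{M(-h(\p)+\delta_\kappa)}$ in the numerator; the remaining $r^{-2\delta_\kappa n}$ (as opposed to $r^{-2M}$) comes from \eqref{M property}, which says precisely that $r^{2M}$ and $r^{2\delta_\kappa n}$ differ only by factors polynomial in $n$ and by $r^{N_\kappa}$, all of which are harmless constants absorbed into the $\O_\kappa$. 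Finally the term $\O_\kappa(r^{\delta_\kappa n})$ is an additional error collected when the polynomial-in-$n$ slack in \eqref{M property} is moved to the error side rather than the main term.

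The main obstacle is the passage from an integral against $\mu$ to an integral against Lebesgue measure with exactly the claimed constant: naively $\mu$ and Lebesgue measure are mutually singular, so one must exploit the cylinder structure and Cauchy–Schwarz at level $M$, and keep careful track of (i) the weights $p_\a$, bounded using the defining property of $G_M$ and the Hoeffding estimate \eqref{Hoeffding bound}, (ii) the Jacobians $r^{M}$ and interval lengths $|I_\c|=|I|r^{N_\kappa}$, and (iii) the calibration \eqref{M property} relating $r^{2M}$ to $r^{2\delta_\kappa n}$ up to polynomial factors in $n$. Getting these three ingredients to combine into exactly $\dfrac{e^{M(-h(\p)+\delta_\kappa)}}{|I|\cdot r^{M+2\delta_\kappa n}}$ with the stated $\O_\kappa(r^{\delta_\kappa n}+e^{-\eta\delta_\kappa M})$ error is where essentially all the work lies; the rest is the elementary expansion above and the triangle inequality. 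I would model the Cauchy–Schwarz step closely on the proof of Lemma 6.1 in \cite{JorSah}, as the author indicates.
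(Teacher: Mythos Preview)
Your first two steps (self-similarity to reduce to $\int W_M\,d\mu$, then discarding $\a\notin G_M$ at cost $\O_\kappa(e^{-\eta\delta_\kappa M})$) match the paper exactly. The gap is in the passage from $\int W_M\,d\mu$ to the Lebesgue $L^2$ norm.

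The paper does \emph{not} use Cauchy--Schwarz or any density comparison. Since $\mu$ is singular with respect to Lebesgue, your talk of ``the density of $\mu$ with respect to $\tfrac{1}{|I_{\c\a}|}\mathrm{Leb}|_{I_{\c\a}}$'' cannot be made literal, and your sketch does not supply an alternative. The actual mechanism is an \emph{approximate constancy / large-values} argument. One first shows, via the mean value theorem, the chain rule \eqref{chain rule}, property~B, and the calibration \eqref{M property}, that
\[
\sup_{y\in I}|W_M'(y)|\cdot r^M|I|\;\le\; 2\pi C_1|l||I|n^{C_2}x_1^{n-1}r^{N_\kappa+2M}\;\le\; r^{\delta_\kappa n},
\]
so $W_M$ varies by at most $r^{\delta_\kappa n}$ across each level-$M$ cylinder $I_{\a'}$. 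One then sets $R_M=\{\a'\in G_M:\sup_{X_{\a'}}|W_M|\ge 2r^{\delta_\kappa n}\}$. On $G_M\setminus R_M$ the trivial bound gives the $\O_\kappa(r^{\delta_\kappa n})$ error. On $R_M$ the approximate constancy forces $|W_M|\ge r^{\delta_\kappa n}$ throughout $I_{\a'}$, hence $\int_{I_{\a'}}|W_M|^2\,dx\ge r^M|I|\cdot r^{2\delta_\kappa n}$; combining this with $\int_{X_{\a'}}|W_M|\,d\mu\le p_{\a'}\le e^{M(-h(\p)+\delta_\kappa)}$ and summing yields the main term with exactly the stated constant $\dfrac{e^{M(-h(\p)+\delta_\kappa)}}{|I|\cdot r^{M+2\delta_\kappa n}}$.

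In particular, \eqref{M property} is not used to relate $r^{2M}$ to $r^{2\delta_\kappa n}$ as you suggest; its role is precisely to cap the oscillation of $W_M$ on level-$M$ cylinders by $r^{\delta_\kappa n}$. Without this derivative bound and the $R_M$ split, there is no bridge from $\mu$ to Lebesgue, and your Cauchy--Schwarz outline does not supply one.
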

\begin{proof}
Using first of all the relation $\tmu_{\c}=\mu\circ \varphi_{\c}^{-1},$ then \eqref{iterated self-similar measure}, we can rewrite our integral as follows:
\begin{align*}
\int \exp(l(f_{n}(x)-f_{m}(x)))\, d\tmu_{\c}&=\int \exp(l(f_{n}(\varphi_{\c}(x))-f_{m}(\varphi_{\c}(x))))\, d\mu\\
&=\int \sum_{\a\in \A^{M}}p_{\a}\exp(l(f_{n}(\varphi_{\c\a}(x))-f_{m}(\varphi_{\c\a}(x))))\, d\mu.
\end{align*}
Therefore it suffices to show that the latter integral satisfies the required bounds. By \eqref{good word bound} we see that 
\begin{equation}
\label{good word split}
\int \sum_{\a\in \A^{M}}p_{\a}\exp(l(f_{n}(\varphi_{\c\a}(x))-f_{m}(\varphi_{\c\a}(x))))\, d\mu = \int W_{M}(x)\, d\mu + \O_{\kappa}(e^{-\eta\delta_{\kappa} M}).
\end{equation}Let $$R_{M}:=\{\a\in G_{M}:\sup_{x\in X_{\a}}|W_{M}(x)|\geq 2r^{\delta_{\kappa}n}\}.$$ If $\a'\in R_{M},$ then by the mean value theorem, \eqref{chain rule}, property B for the sequence of functions $(f_n)_{n=1}^{\infty}$, and \eqref{M property}, for all $x\in I_{\a'}$ we have: 
\begin{align*}
&\quad |W_{M}(x)|\\
&\stackrel{M.V.T.}{\geq} 2r^{\delta_{\kappa}n}-\sup_{y\in I_{\a'}}|W_{M}'(y)|\cdot |I_{\a'}|\\
&\stackrel{\eqref{chain rule}}{=} 2r^{\delta_{\kappa}n} - \sup_{y\in I_{\a'}}\Big|\sum_{\a\in G_{M}}p_\a \cdot 2\pi i l r^{N_{\kappa}+M}(f_{n}'(\varphi_{\c\a}(y))-f_{m}'(\varphi_{\c\a}(y)))\exp(l(f_{n}(\varphi_{\c\a}(y))-f_{m}(\varphi_{\c\a}(y))))\Big|\cdot r^{M}|I|\\
&\stackrel{Property B}{\geq} 2r^{\delta_{\kappa}n} - \sup_{y\in I_{\a'}}\Big(\sum_{\a\in G_{M}}p_\a \cdot 2\pi |l| r^{N_{\kappa}+M}C_{1}n^{C_2}\varphi_{\c\a}(y)^{n-1}\Big)\cdot r^{M}|I| \\
&\geq  2r^{\delta_{\kappa}n} - \Big(\sum_{\a\in G_{M}}p_\a \cdot 2\pi |l| r^{N_{\kappa}+M}C_{1}n^{C_2}x_{1}^{n-1}\Big)\cdot r^{M}|I| \\
&\geq 2r^{\delta_{\kappa}n} - 2\pi C_1 |l||I|n^{C_2}x_{1}^{n-1}r^{N_{\kappa}+2M} \\
&\stackrel{\eqref{M property}}{\geq}  2r^{\delta_{\kappa}n}- r^{\delta_{\kappa}n+N_{\kappa}}\\
&\geq r^{\delta_{\kappa}n}.
\end{align*}
We have shown that 
\begin{equation}
\label{exponential lower bound}
|W_{M}(x)|\geq r^{\delta_{\kappa}n}
\end{equation} for all $x\in I_{\a'}$ for any $\a'\in R_{M}.$ Now notice that for any $\a\in R_{M}$ we have $$\int_{X_{\a}}|W_{M}(x)|\, d\mu \leq p_{\a}\quad\textrm{ and }\quad p_{\a}\leq e^{M(-h(\p)+\delta_{\kappa})}.$$ It follows that 
$$\sum_{\a\in R_{M}}\int_{X_{\a}}|W_{M}(x)|\, d\mu\leq \sum_{\a\in R_{M}} p_{\a}\leq \sum_{\a\in R_{M}}e^{M(-h(\p)+\delta_{\kappa})}.$$
Combining this upper bound with \eqref{exponential lower bound} we obtain 
\begin{align*}
\sum_{\a\in R_{M}}\int_{X_{\a}}|W_{M}(x)|\, d\mu&\leq \sum_{\a\in R_{M}}e^{M(-h(\p)+\delta_{\kappa})}\\
&=\frac{e^{M(-h(\p)+\delta_{\kappa})}}{|I|\cdot r^{M+2\delta_{\kappa}n}}\sum_{\a\in R_{M}}r^M|I|\cdot r^{2\delta_{\kappa}n}\\
&\leq\frac{e^{M(-h(\p)+\delta_{\kappa})}}{|I|\cdot r^{M+2\delta_{\kappa}n}}\sum_{\a\in R_{M}}\int_{I_{\a}}|W_{M}(x)|^2\, dx\\
&\leq \frac{e^{M(-h(\p)+\delta_{\kappa})}}{|I|\cdot r^{M+2\delta_{\kappa}n}}\int_{I}|W_{M}(x)|^2\, dx.
\end{align*}
In the last line we used that for distinct  $\a,\b\in\A^{M}$ the intervals $I_{\a}$ and $I_{\b}$ are disjoint. Using this upper bound, together with \eqref{good word bound} and the definition of $R_{M},$ we obtain 
\begin{align*}
\left|\int W_{M}(x)\,d\mu\right|\leq \int |W_{M}(x)|\,d\mu &=\sum_{\a\in G_{M}}\int_{X_\a} |W_{M}(x)|\,d\mu+\sum_{\stackrel{\a\in \A^{M}}{\a\notin G_{M}}}\int_{X_\a} |W_{M}(x)|\,d\mu\\
&\leq \sum_{\a\in G_{M}}\int_{X_\a} |W_{M}(x)|\,d\mu+\sum_{\stackrel{\a\in \A^{M}}{\a\notin G_{M}}}p_{\a}\\
&\leq \sum_{\a\in R_{M}}\int_{X_{\a}} |W_{M}(x)|\,d\mu+\sum_{\a\in G_{M}\setminus R_{M}}\int_{X_\a} |W_{M}(x)|\,d\mu+\O_{\kappa}(e^{-\eta\delta_{\kappa} M})\\
&\leq \sum_{\a\in R_{M}}\int_{X_{\a}} |W_{M}(x)|\,d\mu+\sum_{\a\in G_{M}\setminus R_{M}}p_{\a}\cdot 2r^{\delta_{\kappa}n}+\O_{\kappa}(e^{-\eta\delta_{\kappa} M})\\
&\leq  \sum_{\a\in R_{M}}\int_{X_{\a}} |W_{M}(x)|\,d\mu+2r^{\delta_{\kappa}n}+\O_{\kappa}(e^{-\eta\delta_{\kappa} M})\\
&\leq  \frac{e^{M(-h(\p)+\delta_{\kappa})}}{|I|\cdot r^{M+2\delta_{\kappa}n}}\int_{I}|W_{M}(x)|^2\, dx+\O_{\kappa}(r^{\delta_{\kappa}n}+e^{-\eta\delta_{\kappa} M}). 
\end{align*}Substituting this bound into \eqref{good word split} we obtain
$$\left|\int \sum_{\a\in \A^{M}}p_{\a}\exp(l(f_{n}(\varphi_{\c\a}(x))-f_{m}(\varphi_{\c\a}(x))))\, d\mu\right| \leq  \frac{e^{M(-h(\p)+\delta_{\kappa})}}{|I|\cdot r^{M+2\delta_{\kappa}n}}\int_{I}|W_{M}(x)|^2\, dx + \O_{\kappa}(r^{\delta_{\kappa}n} +e^{-\eta\delta_{\kappa} M})$$as required.

\end{proof}
To complete our proof of Proposition \ref{decay prop} it is necessary to obtain good upper bounds for $\int_{I} |W_{M}(x)|^2\, dx.$ These bounds are provided by the following lemma.

\begin{lemma}
\label{Lebesgue integral bound}
Let $m<n$ and $l\in \mathbb{Z}\setminus\{0\}.$ For $M$ as defined above we have 
$$\int_{I} |W_{M}(x)|^2\, dx=|I|\cdot e^{M(-h(\p)+\delta_{\kappa})}+\O_{\kappa,l}\left(\frac{1}{r^{M+\lfloor \delta_{\kappa} M\rfloor}x_0^{n}}+\frac{e^{M(-h(\p)+\delta_{\kappa})}}{r^{2M}x_0^{n}}\right).$$
\end{lemma}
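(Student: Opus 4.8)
The plan is to compute $\int_I |W_M(x)|^2\,dx$ by expanding the square into a diagonal part and an off-diagonal part, and showing that the diagonal part contributes the main term $|I|\cdot e^{M(-h(\p)+\delta_\kappa)}$ (up to a controlled error coming from replacing $G_M$ by all of $\A^M$) while each off-diagonal term is small thanks to the van der Corput lemma (Lemma \ref{van der Corput}).

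\begin{proof}[Proof of Lemma \ref{Lebesgue integral bound}]
First I would write
$$\int_I |W_M(x)|^2\,dx = \sum_{\a,\b\in G_M} p_\a p_\b \int_I \exp\bigl(l\bigl[(f_n-f_m)(\varphi_{\c\a}(x)) - (f_n-f_m)(\varphi_{\c\b}(x))\bigr]\bigr)\,dx.$$
The diagonal terms $\a=\b$ contribute $|I|\sum_{\a\in G_M}p_\a^2$. Since $p_\a\le e^{M(-h(\p)+\delta_\kappa)}$ for $\a\in G_M$ (this is exactly the complement of $B(M)$ restricted to level $M$), we get $\sum_{\a\in G_M}p_\a^2 \le e^{M(-h(\p)+\delta_\kappa)}\sum_{\a\in G_M}p_\a \le e^{M(-h(\p)+\delta_\kappa)}$; and by \eqref{good word bound}, $\sum_{\a\in G_M}p_\a = 1 + \O_\kappa(e^{-\eta\delta_\kappa M})$, so in fact
$$|I|\sum_{\a\in G_M}p_\a^2 = |I|\cdot e^{M(-h(\p)+\delta_\kappa)} + \O_\kappa\!\left(\frac{e^{M(-h(\p)+\delta_\kappa)}}{r^{2M}x_0^n}\cdot r^{2M}x_0^n e^{-\eta\delta_\kappa M}\right),$$
which is absorbed in the stated error term since $r^{2M}x_0^n e^{-\eta\delta_\kappa M}$ is bounded (recall $r<1$, $x_0>1$, and $M$ grows linearly in both $n$ and in $\log(nx_1^{n-1})$, so $e^{-\eta\delta_\kappa M}$ decays fast enough; one should check the exponents line up, but this is routine). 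The subtlety to record is just that the diagonal sum over $G_M$ rather than $\A^M$ only costs us this exponentially small correction.

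The off-diagonal terms are where the real work is. Fix distinct $\a,\b\in G_M$ and let $j := |\a\wedge\b|$; then $\lfloor\delta_\kappa M\rfloor \le j \le M$ is not forced, but one considers the phase $\phi(x) := l\bigl[(f_n-f_m)(\varphi_{\c\a}(x)) - (f_n-f_m)(\varphi_{\c\b}(x))\bigr]$ and estimates $\phi'$ from below via \eqref{chain rule}: $\phi'(x) = 2\pi l\, r^{N_\kappa+M}\bigl[(f_n'-f_m')(\varphi_{\c\a}(x)) - (f_n'-f_m')(\varphi_{\c\b}(x))\bigr]/(2\pi)$, wait — more precisely, differentiating and using the chain rule, $\phi'(x) = l r^{N_\kappa+M}\bigl[(f_n-f_m)'(\varphi_{\c\a}(x)) - (f_n-f_m)'(\varphi_{\c\b}(x))\bigr]$. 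Since $\varphi_{\c\a}$ and $\varphi_{\c\b}$ agree on the first $N_\kappa+j$ coordinates and then differ, $\varphi_{\c\a}(x)$ and $\varphi_{\c\b}(x)$ are two distinct points; applying the mean value theorem to $(f_n-f_m)'$ and using property C of Theorem \ref{main theorem}, which gives $|(f_n-f_m)''|\ge C_3 x^{n-2} \ge C_3 x_0^{n-2}$ on $I$, together with the fact that $|\varphi_{\c\a}(x) - \varphi_{\c\b}(x)| \gtrsim r^{N_\kappa+j}$ (a fixed fraction of $|I_{\c\a\wedge\b}|$, using convex strong separation so the two sub-intervals at the splitting coordinate are a definite distance apart), we obtain a lower bound $|\phi'(x)| \gtrsim |l|\, r^{N_\kappa+M} \cdot r^{N_\kappa+j} x_0^{n-2}$. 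Property D guarantees that $(f_n-f_m)'''$ has constant sign, hence $(f_n-f_m)''$ is monotonic, hence so is $\phi'$, so the hypotheses of the van der Corput lemma are met and
$$\left|\int_I \exp(\phi(x))\,dx\right| \le \frac{C}{|l|\, r^{2N_\kappa}\, r^{M+j}\, x_0^{n-2}}.$$

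The final step is to sum the off-diagonal contributions. Grouping pairs $(\a,\b)$ by $j=|\a\wedge\b|$, the number of such pairs with weight is controlled by $\sum_j \bigl(\sum_{\a: \text{prefix of length }j}p_\a\bigr)\cdot(\text{stuff})$; more cleanly, $\sum_{\a,\b\in G_M,\ \a\ne\b} p_\a p_\b r^{-j} \le \sum_{j=0}^{M} r^{-j}\sum_{\substack{\a,\b\in\A^M\\ |\a\wedge\b|=j}}p_\a p_\b \le \sum_{j=0}^M r^{-j}\sum_{\w\in\A^j}p_\w^2 \le \sum_{j=0}^M r^{-j}\max_{\w\in\A^j}p_\w$, and here one must use that $\a\in G_M$ forces its prefixes of length $\ge\lfloor\delta_\kappa M\rfloor$ to lie outside $B$, so $\max p_\w \le e^{j(-h(\p)+\delta_\kappa)}$ for $j\ge\lfloor\delta_\kappa M\rfloor$ while for smaller $j$ one uses the trivial $\sum_{\w\in\A^j}p_\w^2\le 1$; the geometric sum $\sum_{j\ge\lfloor\delta_\kappa M\rfloor}(e^{-h(\p)+\delta_\kappa}/r)^j$ — which converges or is dominated by its last term depending on the sign of $\log(e^{-h(\p)+\delta_\kappa}/r)$, and in either case is $\O_\kappa(1/r^{\lfloor\delta_\kappa M\rfloor})$ or better, wait, one needs to be careful here — is exactly where the definition of $\Gamma_\kappa$ (its fourth entry) was tailored. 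Multiplying the resulting bound $\O_\kappa(r^{-\lfloor\delta_\kappa M\rfloor})$-type quantity by the van der Corput factor $\frac{1}{|l|\,r^M x_0^{n-2}}$ and absorbing constants (including $r^{2N_\kappa}$, $x_0^2$, and $x_0^{-n}$ versus $x_0^{2-n}$, which only changes the implied constant) produces the error term $\O_{\kappa,l}\bigl(\frac{1}{r^{M+\lfloor\delta_\kappa M\rfloor}x_0^n}\bigr)$ in the statement. Combining the diagonal main term with the two error terms completes the proof. The main obstacle is the off-diagonal bookkeeping: getting the right lower bound on $|\varphi_{\c\a}(x)-\varphi_{\c\b}(x)|$ from convex strong separation and then summing $p_\a p_\b r^{-|\a\wedge\b|}$ over $G_M$ in a way that matches the claimed $r^{-\lfloor\delta_\kappa M\rfloor}$ factor — this is precisely what the definition of $G_M$ (truncating $B(k)$ only for $k\ge\lfloor\delta_\kappa M\rfloor$) is designed to make work.
\end{proof}
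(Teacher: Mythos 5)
Your proof follows essentially the same route as the paper: the same diagonal/off-diagonal split, the same van der Corput application with $|\phi'(x)|\gtrsim |l|\,r^{2N_\kappa+M+|\a\wedge\b|}x_0^{n-2}$ obtained from the chain rule, the mean value theorem, property C, and convex strong separation, monotonicity of $\phi'$ from property D, and the same summation over $k=|\a\wedge\b|$ split at $\lfloor\delta_\kappa M\rfloor$ using the definition of $G_M$ to produce the two error terms. One small slip: your claim that the diagonal sum \emph{equals} $|I|\cdot e^{M(-h(\p)+\delta_\kappa)}$ up to the stated error does not follow from $p_\a\le e^{M(-h(\p)+\delta_\kappa)}$ together with $\sum_{\a\in G_M}p_\a=1+\O_\kappa(e^{-\eta\delta_\kappa M})$ (the quantity $\sum_{\a\in G_M}p_\a^2$ may be far smaller than $e^{M(-h(\p)+\delta_\kappa)}$), but only the upper bound is ever used, and that is exactly what the paper's own derivation establishes.
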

\begin{proof}
We start by expanding $\int_I |W_{M}(x)|^2\, dx$:
\begin{align}
\label{L^2 expansion}
&\int_I |W_{M}(x)|^2\, dx\nonumber\\
=&|I|\sum_{\a\in G_{M}}p_{\a}^2 +\sum_{\stackrel{\a,\b\in G_{M}}{\a\neq \b}}p_{\a}\cdot p_{\b}\int_{I} \exp(l(f_{n}(\varphi_{\c\a}(x))-f_{m}(\varphi_{\c\a}(x))-f_{n}(\varphi_{\c\b}(x))+f_{m}(\varphi_{\c\b}(x))))\, dx\nonumber\\
\leq &|I|\cdot e^{M(-h(\p)+\delta_{\kappa})}+\sum_{\stackrel{\a,\b\in G_{M}}{\a\neq \b}}p_{\a}\cdot p_{\b}\int_{I} \exp(l(f_{n}(\varphi_{\c\a}(x))-f_{m}(\varphi_{\c\a}(x))-f_{n}(\varphi_{\c\b}(x))+f_{m}(\varphi_{\c\b}(x)))\, dx.
\end{align}
To bound the integral appearing in the summation in \eqref{L^2 expansion} we will use Lemma \ref{van der Corput}. Before doing this we demonstrate below that the hypotheses of this lemma are satisfied.\\

\noindent \textbf{Verifying the hypotheses of Lemma \ref{van der Corput}.}
Fix $\a,\b\in G_{M}$ such that $\a\neq\b$. Let   $$\phi(x):=l\left(f_{n}(\varphi_{\c\a}(x))-f_{m}(\varphi_{\c\a}(x))-f_{n}(\varphi_{\c\b}(x))+f_{m}(\varphi_{\c\b}(x))\right).$$ By \eqref{chain rule} we have $$\phi'(x)=r^{N_{\kappa}+M}l\left(f_{n}'(\varphi_{\c\a}(x))-f_{m}'(\varphi_{\c\a}(x))-f_{n}'(\varphi_{\c\b}(x))+f_{m}'(\varphi_{\c\b}(x))\right).$$ Define $$h_{n,m}(x):=f_{n}'(x)-f_{m}'(x).$$ Then $$\phi'(x)=r^{N_{\kappa}+M}l\left(h_{n,m}(\varphi_{\c\a}(x))-h_{n,m}(\varphi_{\c\b}(x))\right).$$ Applying the mean value theorem to the function $h_{n,m},$ we see that there exists $z\in I_{\c}$ such that 
\begin{equation}
\label{derivative}
\phi'(x)=r^{N_{\kappa}+M}l\left(\varphi_{\c\a}(x)-\varphi_{\c\b}(x)\right)\left(f_{n}''(z)-f_{m}''(z)\right).
\end{equation}
 It follows from the convex strong separation condition that there exists $c_0>0$ depending only on our underlying IFS such that
\begin{equation}
\label{lower bound1}
|\varphi_{\c\a}(x)-\varphi_{\c\b}(x)| \geq c_0r^{N_{\kappa}+|\a\wedge \b|}
\end{equation} for all $x\in I$. Using property C for our sequence of functions $(f_n)_{n=1}^{\infty},$ and the fact $z \in I_{\c}$ so $z\geq x_0$, it follows that
\begin{equation}
\label{lower bound2}
|f_{n}''(z)-f_{m}''(z)|\geq C_3z^{n-2}
\geq C_3x_{0}^{n-2}.
\end{equation}
Substituting \eqref{lower bound1} and \eqref{lower bound2} into \eqref{derivative}, we see that for all $x\in I$ we have
	\begin{equation}
	\label{Part one}
	|\phi'(x)|\geq c_0C_3lr^{2N_{\kappa}+M+|\a\wedge \b|}x_0^{n-2}.
	\end{equation}
	The right hand side of \eqref{Part one} is the value of $\gamma$ we will use in our application of Lemma \ref{van der Corput}. It remains to check that $\phi'$ satisfies the monotonicity hypothesis of Lemma \ref{van der Corput}. Differentiating $\phi'$ and applying \eqref{chain rule} we have 
	\begin{align*}
	\phi''(x)=r^{2(N_{\kappa}+M)}l\left(f_{n}''(\varphi_{\c\a}(x))-f_{m}''(\varphi_{\c\a}(x))-f_{n}''(\varphi_{\c\b}(x))+f_{m}''(\varphi_{\c\b}(x))\right).
	\end{align*}
	Applying the mean value theorem as above, this time to the function $f_{n}''(x)-f_{m}''(x),$ we may deduce that there exists $z\in I_{\c}$ such that $$\phi''(x)=r^{2(N_{\kappa}+M)}l(\varphi_{\c\a}(x)-\varphi_{\c\b}(x))(f_{n}'''(z)-f_{m}'''(z)).$$ By property D, for our sequence of functions $(f_n)_{n=1}^{\infty}$ we know that $f_{n}'''(z)-f_{m}'''(z)\geq 0$ for all $z\in I_{\c}$ or $f_{n}'''(z)-f_{m}'''(z)\leq 0$ for all $z\in I_{\c}$. What is more, it follows from the convex strong separation condition that the sign of $\varphi_{\c\a}(x)-\varphi_{\c\b}(x)$ is independent of $x$ and depends solely upon $\a$ and $\b$. Therefore we must have $\phi''(x)\leq 0$ for all $x\in I$ or $\phi''\geq 0$ for all $x\in I$. In either case $\phi'$ is monotonic, and we have shown that the monotonicity condition of Lemma \ref{van der Corput} is satisfied. \\
	
\noindent\textbf{Return to the proof of Lemma \ref{Lebesgue integral bound}.}	Taking the right hand side of \eqref{Part one} as our value of $\gamma$ in Lemma \ref{van der Corput}, we obtain
	\begin{equation}
\label{integral bound}	\int_{I} \exp(l(f_{n}(\varphi_{\c\a}(x))-f_{m}(\varphi_{\c\a}(x))-f_{n}(\varphi_{\c\b}(x))+f_{m}(\varphi_{\c\b}(x))))\, dx=\O_{\kappa,l}\left(\frac{1}{r^{M+|\a\wedge \b|}x_0^{n}}\right).
	\end{equation}
Substituting \eqref{integral bound} into the summation appearing in \eqref{L^2 expansion}, and using the definition of $G_{M},$ we see that the following holds:
\begin{align*}
&\sum_{\stackrel{\a,\b\in G_M}{\a\neq \b}}p_{\a}\cdot p_{\b}\int_{I} \exp(l(f_{n}(\varphi_{\c\a}(x))-f_{m}(\varphi_{\c\a}(x))-f_{n}(\varphi_{\c\b}(x))+f_{m}(\varphi_{\c\b}(x))))\, dx\\
=&\O_{\kappa,l}\left(\sum_{\a\in G_{M}}\sum_{\stackrel{\b\in G_M}{\a\neq \b}}\frac{p_{\a}\cdot p_{\b}}{r^{M+|\a\wedge\b|}x_{0}^n}\right)\\
=&\O_{\kappa,l}\left(\frac{1}{r^{M}x_0^{n}}\sum_{\a \in G_{M}}p_{\a}\sum_{k=1}^{M}\sum_{\stackrel{\b\in G_{M}}{|\a\wedge \b|=k}}\frac{p_{\b}}{r^{k}}\right)\\
=&\O_{\kappa,l}\left(\frac{1}{r^{M}x_0^{n}}\sum_{\a \in G_{M}}p_{\a}\sum_{k=1}^{M}\frac{\prod_{j=1}^kp_{\a_j}}{r^{k}}\right)\\
=&\O_{\kappa,l}\left(\frac{1}{r^{M}x_0^{n}}\sum_{\a \in G_{M}}p_{\a}\left(\sum_{k=1}^{\lfloor\delta_{\kappa} M\rfloor -1}\frac{\prod_{j=1}^kp_{\a_j}}{r^{k}}+\sum_{k=\lfloor\delta_{\kappa} M\rfloor}^{M}\frac{\prod_{j=1}^kp_{\a_j}}{r^{k}}\right)\right)\\
=&\O_{\kappa,l}\left(\frac{1}{r^{M}x_0^{n}}\sum_{\a \in G_{M}}p_{\a}\left(\sum_{k=1}^{\lfloor\delta_{\kappa} M\rfloor -1}\frac{1}{r^{k}}+\sum_{k=\lfloor\delta_{\kappa} M\rfloor}^{M}\frac{e^{k(-h(\p)+\delta_{\kappa})}}{r^{k}}\right)\right)\\
=&\O_{\kappa,l}\left(\frac{1}{r^{M}x_0^{n}}\sum_{\a \in G_{M}}p_{\a}\left(\frac{1}{r^{\lfloor \delta_{\kappa} M\rfloor}}+\frac{e^{M(-h(\p)+\delta_{\kappa})}}{r^{M}}\right)\right)\\
=&\O_{\kappa,l}\left(\frac{1}{r^{M+\lfloor \delta_{\kappa} M\rfloor}x_0^{n}}+\frac{e^{M(-h(\p)+\delta_{\kappa})}}{r^{2M}x_0^{n}}\right).
\end{align*}Substituting this bound into \eqref{L^2 expansion} we obtain 
$$\int_{I}|W_{M}(x)|^2\,dx=|I|\cdot e^{M(-h(\p)+\delta_{\kappa})}+\O_{\kappa,l}\left(\frac{1}{r^{M+\lfloor \delta_{\kappa} M\rfloor}x_0^{n}}+\frac{e^{M(-h(\p)+\delta_{\kappa})}}{r^{2M}x_0^{n}}\right)$$ as required.
\end{proof}


We are now in a position to prove Proposition \ref{decay prop} and in doing so complete our proof of Theorem \ref{main theorem}. 

\begin{proof}[Proof of Proposition \ref{decay prop}]
Assume that $m<n$. Combining Lemma \ref{L2 bound} and Lemma \ref{Lebesgue integral bound} we obtain
\begin{align}
\label{Mario}
&\left|\int \exp(l(f_{n}(x)-f_{m}(x)))\,d\tmu_{\c}\right|\nonumber\\
\leq &  \underbrace{\frac{e^{2M(-h(\p)+\delta_{\kappa})}}{r^{M+2\delta_{\kappa} n}}}_{(1)}+\O_{\kappa,l}\left(\underbrace{\frac{e^{M(-h(\p)+\delta_{\kappa})}}{r^{2M+2\delta_{\kappa}n+\lfloor \delta_{\kappa} M\rfloor}x_0^{n}}}_{(2)}+\underbrace{\frac{e^{2M(-h(\p)+\delta_{\kappa})}}{r^{3M+2\delta_{\kappa}n}x_0^{n}}}_{(3)}+\underbrace{r^{\delta_{\kappa} n}}_{(4)} +\underbrace{e^{-\eta \delta_{\kappa} M}}_{(5)}\right).
\end{align}	It remains to show that the terms $(1)-(5)$ decay to zero exponentially fast with respect to $n$. To do this it is useful to recall the definition of $\Gamma_{\kappa}$ and recall that we chose $\delta_{\kappa}$ in such a way that $\Gamma_{\kappa}<1$:
\begin{align*}
\Gamma_{\kappa}:=\max\Bigg\{ &r^{\delta_{\kappa}},\frac{1}{r^{2\delta_{\kappa}}}\left(\frac{e^{2(-h(\p)+\delta_{\kappa})}}{r}\right)^{\frac{\log(1+\kappa)}{-2\log r}},\frac{1+\delta_{\kappa}}{r^{3\delta_{\kappa}}}\left(\frac{e^{2(-h(\p)+\delta_{\kappa})}}{r}\right)^{\frac{\log(1+\kappa)}{-2\log r}},\\
&\frac{1+\delta_{\kappa}}{r^{3\delta_{\kappa}}}\left(\frac{e^{-h(\p)+\delta_{\kappa}}}{r^{\delta_{\kappa}}}\right)^{\frac{\log (1+\kappa)}{-2\log r}}\Bigg\}.
\end{align*} As we will see, most of the terms in \eqref{Mario} can be bounded in terms of $\Gamma_{\kappa}.$ To help with our exposition we treat each of the five terms described above individually.\\

\noindent \textbf{Bounding (1).} A useful inequality that follows from the definition of $M$ is that for $n$ sufficiently large we have
\begin{equation}
\label{M growth}
M\geq n\cdot\frac{\log x_1}{-2\log r}.
\end{equation}This inequality follows upon noticing that the floor term appearing in the definition of $M$ can be bounded below by $\frac{(n-1)\log x_1}{-2\log r}$ for $n$ sufficiently large, and then using the additional $\delta_{\kappa}n$ term. Applying \eqref{M growth}, the fact $x_1\geq 1+\kappa$, and the definition of $\Gamma_{\kappa},$ we see that the following holds for $n$ sufficiently large:
\begin{align}
\label{Bound 1}
\frac{e^{2M(-h(\p)+\delta_{\kappa})}}{r^{M+2\delta_{\kappa} n}}= \frac{1}{r^{2\delta_{\kappa} n}}\left(\frac{e^{2(-h(\p)+\delta_{\kappa})}}{r}\right)^{M}&\stackrel{\eqref{M growth}}{\leq} \left(\frac{1}{r^{2\delta_{\kappa} }}\left(\frac{e^{2(-h(\p)+\delta_{\kappa})}}{r}\right)^{\frac{\log x_1}{-2\log r}}\right)^n\nonumber\\
&\leq \left(\frac{1}{r^{2\delta_{\kappa} }}\left(\frac{e^{2(-h(\p)+\delta_{\kappa})}}{r}\right)^{\frac{\log (1+\kappa)}{-2\log r}}\right)^n\nonumber\\
&\leq \Gamma_{\kappa}^n
\end{align}

\noindent \textbf{Bounding (2).}
Applying \eqref{same growth}, \eqref{M property}, and \eqref{M growth} we have
\begin{align}
\label{Bound 2}
\frac{e^{M(-h(\p)+\delta_{\kappa})}}{r^{2M+2\delta_{\kappa}n+\lfloor \delta_{\kappa} M\rfloor}x_0^{n}}=&\frac{e^{M(-h(\p)+\delta_{\kappa})}}{r^{2M+2\delta_{\kappa}n+\lfloor \delta_{\kappa} M\rfloor}x_1^{n}}\left(\frac{x_1}{x_0}\right)^n \nonumber\\
&\stackrel{\eqref{same growth}}{\leq} \frac{e^{M(-h(\p)+\delta_{\kappa})}}{r^{2M+2\delta_{\kappa}n+\lfloor \delta_{\kappa} M\rfloor}x_1^{n}}(1+\delta_{\kappa})^n\nonumber \\
&\stackrel{\eqref{M property}}{=}\O_{\kappa,l}\left(\frac{n^{C_2}e^{M(-h(\p)+\delta_{\kappa})}}{r^{3\delta_{\kappa}n+\lfloor \delta_{\kappa} M\rfloor}}\left(1+\delta_{\kappa}\right)^n\right)\nonumber \\
&=\O_{\kappa,l}\left(n^{C_2}\left(\frac{1+\delta_{\kappa}}{r^{3\delta_{\kappa}}}\right)^n\left(\frac{e^{(-h(\p)+\delta_{\kappa})}}{r^{\delta_{\kappa} }}\right)^{M}\right)\nonumber\\
&\stackrel{\eqref{M growth}}{=}\O_{\kappa,l}\left(n^{C_2}\left(\frac{1+\delta_{\kappa}}{r^{3\delta_{\kappa}}}\left(\frac{e^{(-h(\p)+\delta_{\kappa})}}{r^{\delta_{\kappa} }}\right)^{\frac{\log x_1}{-2\log r}}\right)^n\right)\nonumber\\
&=\O_{\kappa,l}\left(n^{C_2}\left(\frac{1+\delta_{\kappa}}{r^{3\delta_{\kappa}}}\left(\frac{e^{(-h(\p)+\delta_{\kappa})}}{r^{\delta_{\kappa} }}\right)^{\frac{\log(1+\kappa)}{-2\log r}}\right)^n\right)\nonumber\\
&=\O_{\kappa,l}\left(n^{C_2}\Gamma_{\kappa}^n\right)\nonumber\\
&=\O_{\kappa,l}\left(\Gamma_{\kappa}^{n/2}\right).
\end{align}

\noindent \textbf{Bounding (3).} Repeating the argument used to bound (2) one can show that 
\begin{equation}
\label{Bound 3}
\frac{e^{2M(-h(\p)+\delta_{\kappa})}}{r^{3M+2\delta_{\kappa}n}x_0^{n}}=\O_{\kappa,l}\left(\Gamma_{\kappa}^{n/2}\right).
\end{equation} It is during this part of the proof that we use the fact that $$\frac{1+\delta_{\kappa}}{r^{3\delta_{\kappa}}}\left(\frac{e^{2(-h(\p)+\delta_{\kappa})}}{r}\right)^{\frac{\log(1+\kappa)}{-2\log r}}\leq \Gamma_{\kappa}.$$

\noindent \textbf{Bounding (4).} It is immediate from the definition of $\Gamma_{\kappa}$ that we have 
\begin{equation}
\label{Bound 4}
r^{\delta_{\kappa}n}\leq \Gamma_{\kappa}^n.
\end{equation}

\noindent \textbf{Bounding (5).} Applying \eqref{M growth} and the inequality $\log x_{1}\geq \log(1+\kappa),$ we see that the following holds for $n$ sufficiently large:
\begin{equation}
\label{Bound 5}
e^{-\eta \delta_{\kappa} M}\stackrel{\eqref{M growth}}{\leq} e^{\frac{\eta \delta_{\kappa}\log x_1}{2\log r}\cdot n} \leq e^{\frac{\eta \delta_{\kappa}\log (1+\kappa)}{2\log r}\cdot n}
\end{equation} 

We now let $$\gamma=\max\{\Gamma_{\kappa}^{1/2},e^{\frac{\eta \delta_{\kappa}\log(1+\kappa)}{2\log r}}\}.$$ Notice that $\gamma\in (0,1)$. Substituting \eqref{Bound 1}, \eqref{Bound 2}, \eqref{Bound 3}, \eqref{Bound 4}, and \eqref{Bound 5} into \eqref{Mario}, we obtain
$$\left|\int \exp(l(f_{n}(x)-f_{m}(x)))\,d\tmu_{\c}\right|=\O_{\kappa,l}\left(\gamma^n\right).$$ This completes our proof.
\end{proof}

\noindent \textbf{Acknowledgements.} The author would like to thank the anonymous referee for their useful
comments.

\end{document}